\DeclareMathAlphabet\mathbfcal{OMS}{cmsy}{b}{n}
\newtheorem{theorem}{Theorem}[section]
\newtheorem{proposition}[theorem]{Proposition}
\newtheorem{lemma}[theorem]{Lemma}
\newtheorem{hypothesis}[theorem]{Assumption}
\theoremstyle{definition}
\newtheorem{definition}[theorem]{Definition}
\theoremstyle{remark}
\newtheorem{remark}[theorem]{Remark}
\newcommand\ubar[1]{\stackunder[1.2pt]{$#1$}{\rule{1.0ex}{.2ex}}}
\newcommand{\cM}{{\mathcal{M}}}
\newcommand{\balpha}{{\boldsymbol{\alpha}}}
\newcommand\bL{\ensuremath{\mathbf{L}}}
\newcommand\bX{\ensuremath{\mathbf{X}}}
\newcommand\bY{\ensuremath{\mathbf{Y}}}
\newcommand\bZ{\ensuremath{\mathbf{Z}}}
\newcommand\bp{\ensuremath{\mathbf{p}}}
\newcommand\br{\ensuremath{\mathbf{r}}}
\newcommand\bcM{\mathbfcal{M}}
\begin{document}
\title{Finite-State Contract Theory with a Principal and a Field of Agents}
\author{Ren\'e Carmona \and Peiqi Wang}

\maketitle

\begin{abstract}
We use the recently developed probabilistic analysis of mean field games with finitely many states in the weak formulation, to set-up a principal / agent contract theory model where the principal faces a large population of agents interacting in a mean field manner. We reduce the problem to the optimal control of dynamics of the McKean-Vlasov type, and we solve this problem explicitly in a special case reminiscent of the linear - quadratic mean field game models. 
The paper concludes with a numerical example demonstrating the power of the results when applied to a simple example of epidemic containment. 
\end{abstract}

\section{\textbf{Introduction}}

In many real life situations, not only are we interested in understanding whether and how a system of interacting particles or individuals reach equilibria, but we also attempt to control or manage such equilibria so that the macroscopic behaviors of these systems reflect certain preferences. For example, how should the government control a flu outbreak by encouraging citizens to vaccinate? How should taxes be levied to influence people's  consumption, saving and investment decisions? How should an employer compensate its employees in order to boost  productivity? Present in all these scenarios are two parties: 1) the \emph{principal} who devises a \emph{contract}, according to which incentives are given to, or penalties are imposed on, 2) the \emph{agents}. We shall follow the directives of mainstream models and assume that the agents are \emph{rational} in the sense that they behave optimally to maximize their utilities, which translates the tradeoff between the rewards/penalties they received and the efforts they put in. The principal's problem is therefore to design a contract that maximizes the principal's own utility, which is often a function of the population's states and the transactions with the agents. To make the model even more realistic, we can consider a situation where the principal can only partially observe the agents' actions. We can also model the agents' choices of accepting or declining the contract by introducing a reservation utility.

\vspace{2mm}
Historically, principal / agent problems were first studied under the framework of economic contract theory, and most contributions deal with models involving a principal and a single agent. In the seminal work \cite{holmstrom1991multitask}, the authors considered a discrete-time model in which the agent's effort influences the drift of the state process. By assuming that both the principal and the agent have a CARA utility function, the authors showed that the optimal contract is linear. This model was further extended in \cite{schattler1993contract}, \cite{sung1995contract}, \cite{muller1998contract}, and \cite{hellwig2002contract}. The breakthrough in understanding the continuous-time principal agent problem can be attributed to Yuli Sannikov (see \cite{sannikov2008continuous} and \cite{sannikov2012contracts}) who exploited the dynamic nature of the agent's value function stemming from the dynamic programming principle. This remarkable observation allows the principal's optimal contracting problem to be formulated as a tractable optimal control problem. This approach was further investigated and generalized in \cite{cvitanic2015pa} and \cite{cvitanic2016moral} with the help of  second-order Backward Stochastic Differential Equations (BSDEs).

\vspace{2mm}
It is however a prevailing situation in real life applications that the principal faces multiple agents, and sometimes a large population of agents. Needless to say, the interactions and the competition between the agents are likely to play a key role in shaping their behaviors. This adds an extra layer of complexity since the principal not only needs to worry about the optimal responses of the individual agents, but also if an equilibrium resulting from the interactions among the agents is possible. Equilibria formed by a large population of agents with complex dynamics used to be notoriously intractable, not to mention how to solve an optimization problem on the equilibria which are parameterized by principal's contracts. To circumvent these difficulties, \cite{elie2016pa} borrowed the idea from the theory of mean field game and considered the limit scenario of infinitely many agents. Relying on the weak formulation of stochastic differential mean field games developed in \cite{carmona2015probabilistic}, the authors obtained a succinct representation of the equilibrium strategies of individual agents as well as the evolution of the equilibrium distribution across the population. Based on this convenient description of the equilibria, the author managed to obtain a tractable formulation of the optimal contracting problem, which can be solved by the technique of McKean-Vlasov optimal control.

\vspace{2mm}
Harnessing the probabilistic approach to finite state mean field games developed in \cite{carmona2018discrete}, we believe that the approach proposed in \cite{elie2016pa} can be extended to principal agent problem in finite state spaces. In this paper, we consider a game involving one principal and a population of infinitely many agents whose states belong to a finite space. Leveraging the mean field game models proposed and analyzed in \cite{carmona2018discrete}, we use continuous-time Markov chains to model the evolutions of the agents' states, and we assume that the transition rates between different states are functions of the agents' controls and the statistical distributions of all the agents' states. At the beginning of the game, the principal chooses a continuous reward rate and a terminal reward to be paid to each single agent. They are assumed to be functions of the past history of the agents' states. Each agent then maximizes their objective function by choosing their optimal effort, accounting for the principal's promised  reward and the states of all the other agents. Finally, the principal records a utility which depends upon the total reward offered to the agents as well as the states of the population of the agents. Assuming that the population of agents reaches a Nash equilibrium, the problem of the principal is to optimally choose a contract which will induce an equilibrium among the agents which achieves the maximal payoff.

\vspace{2mm}
Relying on the weak formulation developed in \cite{carmona2018discrete}, the Nash equilibrium can be readily characterized by a McKean-Vlasov BSDE. This BSDE is parameterized by the principal's continuous payment stream, as well as the principal's terminal payment (as terminal condition). Using ideas from \cite{cvitanic2016moral} and \cite{elie2016pa}, we show that controlling the terminal condition of the BSDE is indeed equivalent to controlling the initial condition and the martingale term of the corresponding Stochastic Differential Equation (SDE). This allows us to transform the rather intractable formulation of the principal's optimal contracting problem into an optimal control problem of a McKean-Vlasov SDE.

\vspace{2mm}
We then focus on a special case where the agents are risk-neutral in the terminal reward, the transition rates among their states are linear, and their instantaneous costs are quadratic in the control variable. In this case, we show that the optimal contracting problem can be further simplified to a deterministic control problem on the flow of distribution of the agents' states, and the agents use Markovian strategies when the principal announces the optimal contract. By applying the Pontryagin maximum principal, the optimal strategies of the agents and the resulting dynamics of the states under the optimal contract can be obtained by solving a system of forward-backward Ordinary Differential Equations (ODEs).

\vspace{2mm}
The rest of the paper is organized as follow. We introduce the model in Section \ref{sec:model}, where we  revisit some key elements of the weak formulation of finite state mean field games introduced and analyzed in \cite{carmona2018discrete}. We also formulate the principal's optimal contracting problem. Our first main result is stated in Section \ref{sec:mkv_ctrl}, where we show the equivalence between the optimal contracting problem and a McKean-Vlasov control problem. In Section \ref{sec:lq_model}, we solve explicitly the linear-quadratic model, where the search for the optimal contract can be reduced to a deterministic control problem on the flow of distribution of agents' states. Finally, we illustrate the above results in this linear-quadratic model with numerical computations performed on an example of epidemic containment  in Section \ref{sec:epidemic}.

\section{\textbf{Model Setup}}
\label{sec:model}

\subsection{Notations}
Throughout the paper, whenever $M$ is a square real matrix, we denote by  by $M^*$ its transpose and by $M^+$ its Moore-Penrose pseudo inverse. For a column vector $X$, we denote by $diag(X)$ the square diagonal matrix of which the diagonal elements are given by $X$. The multiplication is understood as matrix multiplication.

\subsection{Agent's State Process}\label{subsec:model_state_process}
We consider a game of duration $T$ involving a principal and a population of infinitely many agents. We assume that at any time $0\le t\le T$ each agent finds itself in one of $m$ different states. We denote by $E:=\{e_1, e_2, \dots, e_m\}$ the space of these possibles states, where the $e_i$'s are the basis vectors in $\mathbb{R}^m$. We denote by $\mathcal{S}$ the $m$-dimensional simplex $\mathcal{S} := \{ p \in \mathbb{R}^{m};\, \sum p_i = 1, p_i \ge 0\}$, which we identify with the space of probability measures on $E$.

\vspace{3mm}
Let $\Omega$ be the space of c\`adl\`ag (right continuous with left limits) functions from $[0,T]$ to $E$ which are left-continuous on $T$. Let $\bX=(X_t)_{0\le t\le T}$ be the canonical process. We interpret $X_t$ as a representative agent's state at time $t$. We denote by $\mathbb{F} = (\mathcal{F}_t)_{t\le T}$ with $\mathcal{F}_t := \sigma(\{X_s, s\le t\})$ the natural filtration generated by $\bX$ and $\mathcal{F} := \mathcal{F}_T$. Next, we fix an initial distribution $p^{\circ} \in \mathcal{S}$ for the agents' states. On $(\Omega, \mathbb{F}, \bar{\mathcal{F}})$ we consider the probability measure $\mathbb{P}$ under which the law of $X_0$ is $p^{\circ}$ and the canonical process $\bX$ is a continuous-time Markov chain where the transition rate between any two different states is $1$. We refer to $\mathbb{P}$ as the \emph{reference measure}. We recall that the process $\bX$ has the following semi-martingale representation (See \cite{elliott1995}, \cite{cohen2008}, \cite{cohen2010}):

\begin{equation}
\label{eq:canonical_representation}
X_t = X_0 + \int_0^t Q^0 X_{s-} ds + \mathcal{M}_t,
\end{equation}
where $\bcM=(\cM_t)_{0\le t\le T}$ is a $\mathbb{R}^m$-valued $\mathbb{P}$-martingale, and $Q^0$ is the square matrix with diagonal elements all equal to $-(m-1)$ and off-diagonal elements all equal to $1$. Moreover, the predictable quadratic variation of the martingale $\bcM$ under $\mathbb{P}$ is $\langle \bcM, \bcM \rangle_t = \int_0^t \psi_t dt$, where the matrix $\psi_t$ is defined as 
$$
\psi_t := diag(Q^0 X_{t-}) - Q^0 diag(X_{t-}) - diag(X_{t-}) Q^0.
$$ 
It is easy to verify that $\psi_t$ is a semi-definite positive matrix, and we define the corresponding (stochastic) seminorm $\|\cdot\|_{X_{t-}}$ on $\mathbb{R}^m$ by:
\begin{equation}\label{eq_ch2:seminorm}
\|z\|^2_{X_{t-}} := z^*\psi_t z.
\end{equation}
The seminorm $\|\cdot\|_{X_{t-}}$ can be written in a more explicit way. For $i\in\{1,\dots,m\}$, let us define the seminorm $\|\cdot\|_{e_i}$ on $\mathbb{R}^m$ by :

\begin{equation}
\label{eq_ch2:seminorm2}
\|z\|^2_{e_i} := z^*  [diag(Q^0 e_i) - Q^0 diag(e_i) - diag(e_i) Q^0] z = \sum_{j\neq i} |z_j - z_i|^2.
\end{equation}
Then it is easy to see that $\|z\|_{X_{t-}} = \sum_{i=1}^m \mathbbm{1}(X_{t-} = e_i)\|z\|_{e_i}$.

\vspace{3mm}
Let $A$ be a compact subset of $\mathbb{R}^l$ in which the agents can choose their controls. Denote by $\mathbb{A}$ the collection of $\mathbb{F}$-predictable processes $\balpha=(\alpha_t)_{0\le t\le T}$ taking values in $A$. We introduce the transition rate function $q: [0,T] \times \{1,\dots,m\}^2 \times A \times \mathcal{S} \ni (t,i,j,\alpha,p)\rightarrow q(t, i, j, \alpha, p)\in\mathbb{R}$, and we denote by $Q(t,\alpha,p)$ the transition rate matrix $[q(t, i, j, \alpha, p)]_{1\le i,j\le m}$. In the rest of the paper, we make the following assumption on the transition rate function $q$:

\begin{hypothesis}\label{hypo:transrateminor}
(i) For all $(t,i,\alpha, p) \in [0,T] \times \{1,\dots,m\}\times A \times \mathcal{S}$, we have $\sum_{j=1}^m q(t, i, j, \alpha, p) = 0$.

\noindent(ii) There exist constants $C>0$ such that for all $(t,i,j,\alpha,p) \in [0,T]\times \{1,\dots,m\}^2\times A\times\mathcal{S}$ with $i\neq j$, we have $0 < q(t,i,j,\alpha,p) \le C$.

\noindent(iii) There exists a constant $C>0$ such that for all $(t,i,j)\in[0,T]\times \{1,\dots,m\}^2$, $\alpha,\alpha' \in A$, $p,p' \in \mathcal{S}$, we have:
{\small
\begin{equation}
|q(t,i,j,\alpha,p) - q(t,i,j,\alpha', p')| \le C(\|\alpha - \alpha'\| + \|p - p'\|).
\end{equation}
}
\end{hypothesis}

We  use the weak formulation to specify how the states of the agents evolve. This means that we specify how the agents' controls and the states of the other agents determine the probability distribution of any given agent's state via its stochastic transition rate, rather than the dynamics of the state process sample path. In addition, we assume that the interactions between the agents are \emph{ mean field}, in the sense that each agent feels the impact of the other agents via the statistical distribution of their states. Let us fix $\bp=(p(t))_{0\le t\le T}$ a flow of probability measures in $E$ such that $p(0) = p^{\circ}$. $p(t)$ represents the statistical distribution of the agents' states at time $t$. We also fix a control $\balpha\in\mathbb{A}$. By Girsanov's theorem (see Theorem III.41 in \cite{protter2005stochastic} or Lemma 4.3 in \cite{sokol2015exponential}), we construct a probability measure $\mathbb{Q}^{(\balpha,\bp)}$ which is equivalent to $\mathbb{P}$ and such that $\bX$ admits the following representation:

\begin{equation}
\label{eq_ch3:X_decomp}
X_t = X_0 + \int_{(0,t]} Q^*(s, \alpha_s, p(s)) X_{s-} dt + \mathcal{M}^{(\balpha,\bp)}_t,
\end{equation}
where $\mathcal{M}^{(\balpha,\bp)}_t$ is a martingale under the measure $\mathbb{Q}^{(\balpha,\bp)}$. The Radon-Nikodym derivative of $\mathbb{Q}^{(\balpha,\bp)}$ with regard to $\mathbb{P}$ is given by $\frac{d\mathbb{Q}^{(\balpha, \bp)}}{d\mathbb{P}} = \mathcal{E}(\bL^{(\balpha,\bp)})_T$, where $\mathcal{E}(\bL^{(\balpha,\bp)})$ is the Dol\'eans-Dade exponential of the martingale $\bL^{(\balpha,\bp)}$ defined as follows:
\begin{equation}\label{eq_ch3:martingale_girsanov}
dL^{(\balpha,\bp)}_t = X_{t-}^* (Q(t, \alpha_t, p(t)) - Q^0)\psi_t^+ d\mathcal{M}_t,\;\; L^{(\balpha,\bp)}_0 = 0.
\end{equation}
The representation of $\bX$ in equation (\ref{eq_ch3:X_decomp}) means that under the measure $\mathbb{Q}^{(\balpha,\bp)}$, the stochastic intensity rate of jumps for the state process $\bX$ is given by the matrix $Q(t, \alpha_t, p(t))$. In addition, since $\mathbb{Q}^{(\balpha,\bp)}$ and the reference measure $\mathbb{P}$ coincide on $\mathcal{F}_0$, the distributions of $X_0$ under $\mathbb{Q}^{(\balpha,\bp)}$ and under $\mathbb{P}$ are both $p^{\circ}$. In particular, when $\balpha$ is a Markov control, i.e. of the form $\alpha_t = \phi(t, X_{t-})$ for some measurable feedback function $\phi$, the canonical process $\bX$ becomes a continuous-time Markov chain under the measure $\mathbb{Q}^{(\balpha,\bp)}$ for which the transition rate from state $i$ to state $j$ at time $t$ is $q(t,i,j,\phi(t,i),p(t))$. See \cite{carmona2018discrete} for more details about the construction of the probability measure $\mathbb{Q}^{(\balpha,\bp)}$.

\subsection{Principal's Contract and Agent's Objective Function}
\label{subsec:model_objective}

The principal chooses a contract consisting of a payment stream $\br=(r_t)_{0\le t\le T}$ which is a $\mathbb{F}-$predictable process and a terminal payment $\xi$ which is a $\mathcal{F}_T-$ measurable $\mathbb{P}-$square integrable random variable. Let $c:[0,T] \times E \times A \times \mathcal{S} \rightarrow \mathbb{R}$ be the running cost function of the typical agent. Also, let $u: \mathbb{R} \rightarrow \mathbb{R}$ (resp. $U: \mathbb{R} \rightarrow \mathbb{R}$) be the agent's utility function with regard to the continuous payments (resp. the terminal payment). If an agent accepts the contract $(\br,\xi)$ and the statistical distribution of his/her state at time $t$ is denoted $p(t)$ for $0\le t\le T$, the agent's expected total cost, $J^{\br,\xi}(\balpha,\bp)$, is defined as:
\begin{equation}\label{eq:total_cost_agent}
J^{\br,\xi}(\balpha,\bp):=\mathbb{E}^{\mathbb{Q}^{(\balpha,\bp)}}\left[\int_0^T [c(t, X_t, \alpha_t, p(t)) - u(r_t)] dt - U(\xi)\right].
\end{equation}

\subsection{Principal's Objective Function}
\label{subsec:model_objective_principal}

The principal's objective function depends on the distribution of the agents' states and the payments made to the agents. Let $c_0:[0,T]\times\mathcal{S} \rightarrow \mathbb{R}$ be the running cost function and $C_0:\mathcal{S} \rightarrow \mathbb{R}$ the terminal cost function resulting from the distribution of the agents' behaviors. Given that all the agents choose $\balpha$ as their control strategy,  the distribution of the agents' states is given by $\bp=(p(t))_{t\in[0, T]}$, and the  contract offered by the principal is $(\br, \xi)$,  the principal records the expected total cost, $J_0^{\balpha,\bp}(\br, \xi)$, defined as:
\begin{equation}\label{eq:total_cost_principal}
J_0^{\balpha,\bp}(\br,\xi):=\mathbb{E}^{\mathbb{Q}^{(\balpha,\bp)}}\left[\int_0^T [c_0(t, p(t)) + r_t] dt +C_0(p(T)) + \xi\right].
\end{equation}

\subsection{Agents' Mean Field Nash Equilibria}
\label{subsec:model_agent_eq}

We assume that the population of infinitely many agents reach a Nash equilibrium for a given contract $(\br,\xi)$ proposed by the principal. We recall the following definition of Nash equilibria in the \emph{weak formulation}  introduced in \cite{carmona2018discrete}, and adapted to the present situation. See Definition 2.8 therein.

\begin{definition}
\label{def:equilibrium}
Let $(\br,\xi)$ be a contract, $\hat \bp:[0,T]\rightarrow \mathcal{S}$ be a measurable mapping such that $\hat p(0) = p^{\circ}$, and $\hat \balpha\in\mathbb{A}$ be an admissible control for the agents. We say that the couple $(\hat \balpha,\hat \bp)$ is a Nash equilibrium for the contract $(\br,\xi)$, in which case we use the notation $(\hat \balpha,\hat \bp) \in \mathcal{N}(\br,\xi)$, if:

\vspace{2mm}
\noindent (i) $\hat\balpha$ minimizes the cost when the agent is committed to the contract $(\br,\xi)$ and the distribution of all the agents is given by the flow $\hat \bp$, i.e. if:
\begin{equation}\label{eq:def_nasheq_optimality}
\hat\balpha = \arg\inf_{\balpha \in \mathbb{A}}\mathbb{E}^{\mathbb{Q}^{(\balpha,\hat \bp)}}\left[\int_0^T [c(t, X_t, \alpha_t, \hat p(t)) - u(r_t)] dt - U(\xi)\right].
\end{equation}
\noindent (ii) $(\hat\balpha,\hat \bp)$ satisfies the consistency conditions:
\begin{equation}\label{eq:def_nasheq_cons1}
\forall t \in [0,T] \qquad \hat p(t) = \mathbb{E}^{\mathbb{Q}^{(\hat\balpha,\hat \bp)}}[X_t].
\end{equation}
Note that equation (\ref{eq:def_nasheq_cons1}) is equivalent to $\hat p_i(t) = \mathbb{Q}^{(\hat\balpha,\hat \bp)}[X_t = e_i]$, for all $t \in [0,T]$ and  $i\in\{1,\dots,m\}$.
\end{definition}

\subsection{Principal's Optimal Contracting Problem}
\label{subsec:model_optimal_contract}

We now turn to the principal's optimal choice of the contract. This amounts to minimizing the objective function computed based on the Nash equilibria formed by the agents. Of course we need to address the existence of Nash equilibria. However, in the following formulation of the optimal contracting problem, we shall avoid the problem of existence by only considering the contracts $(\br,\xi)$ that result in at least one Nash equilibrium. We call such contracts \emph{admissible contracts}, and we denote by $\mathcal{C}$ the collection of all admissible contracts. In addition, among all the possible Nash equilibria, we would like to disregard the equilibria in which the agent's expected total cost is above a given threshold $\kappa$. The motivation for imposing this additional constraint is to model the \emph{take-it-or-leave-it} behavior of the agents in contract theory: if the agent's expected total cost exceeds a certain threshold, it should be able to turn down the contract. Summarizing the constraints mentioned above, we propose the following optimization problem for the principal:
{\small
\begin{equation}\label{eq:principal_pb}
V(\kappa) := \inf_{(\br,\xi)\in\mathcal{C}}\inf_{\substack{(\balpha,\bp) \in \mathcal{N}(\br,\xi)\\ J^{\br,\xi}(\balpha,\bp) \le \kappa}}\mathbb{E}^{\mathbb{Q}^{(\balpha,\bp)}}\left[\int_0^T [c_0(t, p(t)) + r_t] dt +C_0(p(T)) +\xi\right],
\end{equation}
}
where we adopt the convention that the infimum over an empty set equals $+\infty$. 

\section{\textbf{A McKean-Vlasov Control Problem}}
\label{sec:mkv_ctrl}

The original formulation of the principal's optimal contract is intuitive but far from tractable. In this section we  provide an equivalent formulation which turns out to be an optimal control problem of McKean-Vlasov type. To this end, we rely on the probabilistic characterization of the agents' Nash equilibria developed in \cite{carmona2018discrete}.

\subsection{Agent's Optimization Problem}
\label{subsec:optimization_agent}

We define the Hamiltonian  $H: [0,T] \times E \times \mathbb{R}^m\times A \times \mathcal{S} \times \mathbb{R} \rightarrow \mathbb{R} $ for the agent's optimization problem by:

\begin{equation}
\label{eq:hamiltonian_def_agent}
H(t,x,z,\alpha,p, r) := c(t,x,\alpha,p) - r + x^*(Q(t, \alpha, p) - Q^0)z.
\end{equation}
Recall that the canonical process $\bX$ representing the typical agent's state takes value in the set $E = \{e_1,\dots,e_m\}$. We define the reduced Hamiltonian by $H_i(t,z,\alpha,p,R):=H(t,e_i,z,\alpha,p,R)$ for $i=1,dots,m$. It is straightforward to check:
\begin{equation}\label{eq:reduced_hamiltonian_def}
H_i(t,z,\alpha,p, r) := c(t,e_i, \alpha,p) - r + \sum_{j\neq i} (z_j - z_i)(q(t, i, j, \alpha, p) - 1).
\end{equation} 
We make the following assumption on the Hamiltonian minimizarion. Clearly, minimizers of $H_i$ should not depend on $r$.

\begin{hypothesis}
\label{hypo:lipschitz_optimizer}
For all $0\le t\le T$, $i\in\{1,\dots,m\}$, $z\in\mathbb{R}^m$ and $p\in\mathcal{S}$, the mapping $\alpha \rightarrow H_i(t,z,\alpha,p,r)$ admits a unique minimizer. We denote the minimizer by $\hat a_i(t,z,p)$. In addition, for all $i\in\{1,\dots,m\}$, we assume that $\hat a_i$ is a measurable mapping on $[0,T]\times\mathbb{R}^m\times\mathcal{S}$, and there exists a constant $C>0$ such that for all $i\in\{1,\dots,m\}$, $z, z'\in\mathbb{R}^m$ and $p\in\mathcal{S}$:
\begin{equation}\label{eq:optimizer_lipschitz}
\|\hat a_i(t,z,p) - \hat a_i(t,z',p)\| \le C\|z - z'\|_{e_i}
\end{equation}
\end{hypothesis}

\begin{remark}
Assumption \ref{hypo:lipschitz_optimizer} holds, for example, when the cost function $c$ is strongly convex in $\alpha$ and the transition rate function is linear in $\alpha$. This can be proved by following the arguments in the proof of Lemma 3.3, Chapter 3 in \cite{carmona2017book}, although the later is stated in the context of the stochastic optimal control of SDEs, which has a slightly different form of Hamiltonian.
\end{remark}

We denote by $\hat H_i$ the minimum of the reduced Hamiltonian: 
\begin{equation}
\hat H_i(t,z,p,r) := \hat H_i(t,z,\hat a_i(t,z,p),p,r). 
\end{equation}
Now we define the mappings $\hat H$ and $\hat a$ by:
\begin{align}
\hat H(t,x,z,p,r) :=&\;\; \sum_{i=1}^m \hat H_i(t,z,p,r) \mathbbm{1}(x = e_i),\label{eq:def_h_hat}\\
\hat a(t,x,z,p) :=&\;\; \sum_{i=1}^m \hat a_i(t,z,p) \mathbbm{1}(x = e_i).\label{eq:def_a_hat}
\end{align}
Under Assumption \ref{hypo:lipschitz_optimizer}, it is clear that $\hat a(t,x,z,p)$ is the unique minimizer of the mapping $\alpha \rightarrow H(t,x,z,\alpha,p,r)$, and the minimum is given by $\hat H(t,x,z,p,r)$. In addition, from Assumption \ref{hypo:transrateminor}, we can show the following result on the Lipschitz property of $\hat H$:

\begin{lemma}
\label{lem:lip_h_hat}
There exists a constant $C>0$ such that for all $(\omega,t)\in\Omega\times(0,T]$, $r\in\mathbb{R}$, $p\in\mathcal{S}$ and $z,z'\in\mathbb{R}^m$, we have:
\begin{equation}\label{eq:lipschitz_h_hat}
|\hat H(t,X_{t-},z,p,r) - \hat H(t,X_{t-},z',p,r)| \le C\|z - z'\|_{X_{t-}}
\end{equation}
\begin{equation}\label{eq:lipschitz_full_optimizer}
|\hat a(t,X_{t-},z,p) - \hat a(t,X_{t-},z',p)| \le C\|z - z'\|_{X_{t-}}.
\end{equation}
\end{lemma}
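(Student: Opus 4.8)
The plan is to reduce the two estimates to fixed-state inequalities indexed by $i\in\{1,\dots,m\}$ and then reassemble them using the identity $\|z\|_{X_{t-}}=\sum_{i=1}^m\mathbbm{1}(X_{t-}=e_i)\|z\|_{e_i}$ together with the definitions \reff{eq:def_h_hat}--\reff{eq:def_a_hat}. Since the indicators $\mathbbm{1}(X_{t-}=e_i)$ are mutually exclusive and sum to one, it is enough to produce a constant $C>0$, independent of $i$, such that for all $t$, $p$, $r$ and $z,z'\in\mathbb{R}^m$,
\begin{equation*}
|\hat H_i(t,z,p,r)-\hat H_i(t,z',p,r)|\le C\|z-z'\|_{e_i},\qquad \|\hat a_i(t,z,p)-\hat a_i(t,z',p)\|\le C\|z-z'\|_{e_i}.
\end{equation*}
The second inequality is exactly the content of Assumption \ref{hypo:lipschitz_optimizer}, so once the reassembly step is in place, \reff{eq:lipschitz_full_optimizer} requires no further work.

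For $\hat H_i$ I would use the standard \emph{comparison argument for the value of a parametrized minimization}, which avoids any appeal to differentiability of $z\mapsto\hat a_i(t,z,p)$. Write $\alpha:=\hat a_i(t,z,p)$ and $\alpha':=\hat a_i(t,z',p)$. By optimality, $\hat H_i(t,z,p,r)=H_i(t,z,\alpha,p,r)\le H_i(t,z,\alpha',p,r)$ while $\hat H_i(t,z',p,r)=H_i(t,z',\alpha',p,r)$, hence
\begin{equation*}
\hat H_i(t,z,p,r)-\hat H_i(t,z',p,r)\le H_i(t,z,\alpha',p,r)-H_i(t,z',\alpha',p,r)=\sum_{j\neq i}\big[(z_j-z_i)-(z'_j-z'_i)\big]\big(q(t,i,j,\alpha',p)-1\big),
\end{equation*}
where the cost and reward terms cancel by \reff{eq:reduced_hamiltonian_def} because $\alpha'$, $p$ and $r$ are held fixed. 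Reversing the roles of $z$ and $z'$ gives the matching bound with $\alpha$ in place of $\alpha'$, so it only remains to estimate the right-hand side.

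By part (ii) of Assumption \ref{hypo:transrateminor} we have $0<q(t,i,j,\alpha',p)\le C$, hence $|q(t,i,j,\alpha',p)-1|\le C+1$; applying Cauchy--Schwarz over the $m-1$ indices $j\neq i$ and recalling the definition \reff{eq_ch2:seminorm2} of $\|\cdot\|_{e_i}$,
\begin{equation*}
\Big|\sum_{j\neq i}\big[(z_j-z_i)-(z'_j-z'_i)\big]\big(q(t,i,j,\alpha',p)-1\big)\Big|\le(C+1)\sqrt{m-1}\,\Big(\sum_{j\neq i}\big|(z-z')_j-(z-z')_i\big|^2\Big)^{1/2}=(C+1)\sqrt{m-1}\,\|z-z'\|_{e_i},
\end{equation*}
and the same bound holds with $\alpha$ replacing $\alpha'$. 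Combining the two one-sided estimates yields $|\hat H_i(t,z,p,r)-\hat H_i(t,z',p,r)|\le(C+1)\sqrt{m-1}\,\|z-z'\|_{e_i}$, uniformly in $i$, $t$, $p$, $r$. Multiplying by $\mathbbm{1}(X_{t-}=e_i)$, summing over $i$, and using $\|z\|_{X_{t-}}=\sum_i\mathbbm{1}(X_{t-}=e_i)\|z\|_{e_i}$ gives \reff{eq:lipschitz_h_hat}, while the identical reassembly applied to Assumption \ref{hypo:lipschitz_optimizer} gives \reff{eq:lipschitz_full_optimizer}. I do not expect a genuine obstacle here: the only subtlety is that the bound on $\hat H_i$ must not be routed through the modulus of continuity of $\hat a_i$ (which would force the worse constant from Assumption \ref{hypo:lipschitz_optimizer} plus an extra composition estimate and, in principle, regularity of $c$ and $q$ in $\alpha$), and the two-sided comparison trick is precisely what sidesteps that.
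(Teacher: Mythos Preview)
Your argument is correct. The paper does not supply a proof of this lemma at all; it simply asserts that the result follows from Assumption~\ref{hypo:transrateminor} (and implicitly Assumption~\ref{hypo:lipschitz_optimizer}) and moves on. Your two-sided comparison trick for $\hat H_i$---bounding the infimum difference by the difference of the Hamiltonians at a fixed control, where the $c$ and $r$ terms cancel and only the linear-in-$z$ part survives---is exactly the natural way to fill the gap, and your reassembly via the indicator decomposition is the right reduction. There is nothing to compare against in the paper itself.
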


The total expected cost of the agents and the value function of the agent's optimization problem can be characterized by BSDEs driven by continuous-time Markov chain. We refer the readers to \cite{cohen2008} and \cite{cohen2010} for the general theory for this type of BSDE. Let us fix a contract $(\br,\xi)$ and a measurable function $\bp:[0,T]\rightarrow \mathcal{S}$. Given $\balpha\in\mathbb{A}$, we consider the following BSDEs:
\begin{equation}\label{eq:bsde_agent_total_cost}
Y_t = -U(\xi) + \int_t^T H(s, X_{s-}, Z_s, \alpha_s, p(s), u(r_t)) ds - \int_t^T Z_s^*d\mathcal{M}_s.
\end{equation}
\begin{equation}\label{eq:bsde_agent_optimization}
Y_t = -U(\xi) + \int_t^T \hat H(s, X_{s-}, Z_s, p(s), u(r_t)) ds - \int_t^T Z_s^*d\mathcal{M}_s.
\end{equation}
In \cite{carmona2018discrete}, we proved the following result:

\begin{lemma}
\label{lem:bsde_characterization}
For each fixed contract $(\br,\xi)$, $\balpha\in\mathbb{A}$ and measurable mapping $\bp: [0,T]\rightarrow\mathcal{S}$,

\vspace{1mm}
\noindent(i) the BSDE (\ref{eq:bsde_agent_total_cost}) admits a unique solution $(\bY, \bZ)$ and we have $J^{\br,\xi}(\balpha,\bp) = \mathbb{E}^{\mathbb{P}}[Y_0]$.

\vspace{1mm}
\noindent(ii) The BSDE (\ref{eq:bsde_agent_optimization}) admits a unique solution $(\bY,\bZ)$ and we have $\inf_{\balpha \in \mathbb{A}}J^{\br,\xi}(\balpha,\bp) = \mathbb{E}^{\mathbb{P}}[Y_0]$. In addition, the optimal control of the agent is $\hat a(t, X_{t-}, Z_t, p(t))$.
\end{lemma}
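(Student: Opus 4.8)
The plan is to reduce both statements to the well-posedness and representation theory for BSDEs driven by the martingale part of a finite-state Markov chain, which is exactly the toolbox developed and applied in \cite{carmona2018discrete}: the agent's problem here is the one studied there, with the running cost $c(t,x,\alpha,p)$ replaced by $c(t,x,\alpha,p)-u(r_t)$ and the terminal cost replaced by $-U(\xi)$. So the first step is well-posedness of \reff{eq:bsde_agent_total_cost} and \reff{eq:bsde_agent_optimization}. Both are of the form $Y_t=\eta+\int_t^T f(s,X_{s-},Z_s)\,ds-\int_t^T Z_s^*\,d\mathcal{M}_s$ with $y$-independent driver and $\eta=-U(\xi)\in L^2(\mathbb{P})$; from \reff{eq:reduced_hamiltonian_def}, writing $x=e_i$, the map $z\mapsto x^*(Q(s,\alpha,p)-Q^0)z$ equals $\sum_{j\neq i}(z_j-z_i)(q(s,i,j,\alpha,p)-1)$, so by Cauchy--Schwarz and Assumption \ref{hypo:transrateminor}(ii) the driver of \reff{eq:bsde_agent_total_cost} is Lipschitz in $z$ with respect to the stochastic seminorm $\|\cdot\|_{X_{s-}}$ uniformly in the remaining arguments, while the same Lipschitz bound for \reff{eq:bsde_agent_optimization} is the content of Lemma \ref{lem:lip_h_hat}. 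Together with the standing integrability of $s\mapsto c(s,X_{s-},\alpha_s,p(s))-u(r_s)$, this places us within the existence/uniqueness results of Cohen and Elliott (\cite{cohen2008}, \cite{cohen2010}), which I would simply invoke.

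Next I would establish the representation $J^{\br,\xi}(\balpha,\bp)=\mathbb{E}^{\mathbb{P}}[Y_0]$ in (i). Comparing \reff{eq:canonical_representation} with \reff{eq_ch3:X_decomp} and using that $Q^0$ is symmetric gives $\mathcal{M}_t=\mathcal{M}^{(\balpha,\bp)}_t+\int_0^t\big(Q^*(s,\alpha_s,p(s))-Q^0\big)X_{s-}\,ds$, so that $Z_s^*\,d\mathcal{M}_s=Z_s^*\,d\mathcal{M}^{(\balpha,\bp)}_s+X_{s-}^*\big(Q(s,\alpha_s,p(s))-Q^0\big)Z_s\,ds$. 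Plugging this into \reff{eq:bsde_agent_total_cost} and using the definition \reff{eq:hamiltonian_def_agent} of $H$, the $ds$-drift contributed by the driver cancels exactly the $ds$-term produced by the change of martingale, leaving
\[
Y_0 \;=\; -U(\xi) + \int_0^T \big[c(s,X_s,\alpha_s,p(s)) - u(r_s)\big]\,ds - \int_0^T Z_s^*\,d\mathcal{M}^{(\balpha,\bp)}_s .
\]
Taking $\mathbb{E}^{\mathbb{Q}^{(\balpha,\bp)}}$, noting that $\mathbb{Q}^{(\balpha,\bp)}$ and $\mathbb{P}$ coincide on $\mathcal{F}_0$ (hence $\mathbb{E}^{\mathbb{Q}^{(\balpha,\bp)}}[Y_0]=\mathbb{E}^{\mathbb{P}}[Y_0]$) and that the last stochastic integral has zero $\mathbb{Q}^{(\balpha,\bp)}$-expectation then gives (i).

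For (ii) I would combine a one-sided estimate with a verification step. Running the computation above for the solution $(\bY,\bZ)$ of \reff{eq:bsde_agent_optimization} but keeping an arbitrary $\balpha\in\mathbb{A}$ when passing to $\mathbb{Q}^{(\balpha,\bp)}$, the pointwise inequality $\hat H(s,X_{s-},Z_s,p(s),u(r_s))\le H(s,X_{s-},Z_s,\alpha_s,p(s),u(r_s))$ turns the cancellation into an inequality and yields $\mathbb{E}^{\mathbb{P}}[Y_0]\le J^{\br,\xi}(\balpha,\bp)$, whence $\mathbb{E}^{\mathbb{P}}[Y_0]\le\inf_{\balpha\in\mathbb{A}}J^{\br,\xi}(\balpha,\bp)$ (one could equivalently appeal to a comparison theorem for Markov-chain BSDEs). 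Conversely, set $\hat\balpha_s:=\hat a(s,X_{s-},Z_s,p(s))$; this is an admissible control because $\hat a$ is a measurable selection by Assumption \ref{hypo:lipschitz_optimizer} composed with the predictable processes $X_{\cdot-}$ and $Z_\cdot$, and by definition of $\hat a$ we have $\hat H(s,X_{s-},Z_s,p(s),u(r_s))=H(s,X_{s-},Z_s,\hat\balpha_s,p(s),u(r_s))$, so $(\bY,\bZ)$ also solves \reff{eq:bsde_agent_total_cost} with control $\hat\balpha$. By uniqueness for \reff{eq:bsde_agent_total_cost} and part (i), $\mathbb{E}^{\mathbb{P}}[Y_0]=J^{\br,\xi}(\hat\balpha,\bp)$; combined with the previous bound this gives $\inf_{\balpha\in\mathbb{A}}J^{\br,\xi}(\balpha,\bp)=\mathbb{E}^{\mathbb{P}}[Y_0]=J^{\br,\xi}(\hat\balpha,\bp)$, so $\hat\balpha$ is optimal.

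The step I expect to require the most care, and the only one that is not a formal manipulation, is justifying that $\mathbb{E}^{\mathbb{Q}^{(\balpha,\bp)}}\!\big[\int_0^T Z_s^*\,d\mathcal{M}^{(\balpha,\bp)}_s\big]=0$: the solution $(\bY,\bZ)$ is produced in $L^2(\mathbb{P})$ rather than in $L^2(\mathbb{Q}^{(\balpha,\bp)})$. I would handle this by localizing along stopping times that reduce the stochastic integral to a genuine martingale, taking expectations, and passing to the limit, the uniform integrability of the localized terms following from the $L^2(\mathbb{P})$ a priori bounds on $(\bY,\bZ)$ together with a H\"older argument: under Assumption \ref{hypo:transrateminor} the matrix $Q(\cdot,\cdot,\cdot)-Q^0$ is bounded, so the jumps of $\bL^{(\balpha,\bp)}$ in \reff{eq_ch3:martingale_girsanov} are bounded and $\mathcal{E}(\bL^{(\balpha,\bp)})_T=d\mathbb{Q}^{(\balpha,\bp)}/d\mathbb{P}$ has moments of all orders. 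This is precisely the estimate already carried out in \cite{carmona2018discrete}, so in the write-up I would cite it rather than reproduce it.
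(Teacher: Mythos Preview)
Your proposal is correct and follows the standard route. Note, however, that the paper does not actually prove this lemma: immediately before the statement it simply says ``In \cite{carmona2018discrete}, we proved the following result,'' so there is no proof in the paper to compare against. Your sketch---well-posedness via the Cohen--Elliott theory using the Lipschitz bound in the seminorm $\|\cdot\|_{X_{t-}}$, the Girsanov cancellation to get the representation in (i), and the pointwise Hamiltonian inequality plus verification at $\hat\balpha$ for (ii)---is exactly the argument one expects in \cite{carmona2018discrete}, and your identification of the martingale-property step under $\mathbb{Q}^{(\balpha,\bp)}$ as the only genuinely technical point (handled via the $L^p$ moments of the Dol\'eans-Dade exponential, which follow from the boundedness in Assumption \ref{hypo:transrateminor}) is on target.
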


\subsection{Representation of Nash Equilibria Based on BSDEs}
\label{subsec:mkv_ctrl_nash_eq}
Now we consider the following McKean-Vlasov BSDE:
\begin{align}
Y_t =&\;\; -U(\xi) + \int_t^T \hat H(s, X_{s-}, Z_s, p(s), u(r_s)) ds - \int_t^T Z_s^* d\mathcal{M}_s,\label{eq:bsde_mfg_1}\\
\mathcal{E}_t =&\;\;  1 + \int_0^t \mathcal{E}_{s-} X_{s-}^* (Q(s, \alpha_s, p(s)) - Q^0)\psi_s^+ d\mathcal{M}_s,\label{eq:bsde_mfg_2}\\
\alpha_t = &\;\; \hat a(t,X_{t-},Z_t,p(t)),\label{eq:bsde_mfg_3}\\
p(t) = &\;\; \mathbb{E}^{\mathbb{Q}}[X_t],\;\;\frac{d\mathbb{Q}}{d\mathbb{P}} = \mathcal{E}_T.
\label{eq:bsde_mfg_4}
\end{align}

\begin{definition}
\label{def:mkv_bsde_sol}
We say that a tuple $(\bY,\bZ,\balpha,\bp,\mathbb{Q})$ is a solution to the McKean-Vlasov BSDE (\ref{eq:bsde_mfg_1})-(\ref{eq:bsde_mfg_4}) if $\bY$ is an adapted  c\`adl\`ag process such that $\mathbb{E}^{\mathbb{P}}[\int_0^T Y_t^2] < +\infty$ for all $t \in [0,T]$, $\bZ$ is an adapted left-continuous process such that $\mathbb{E}^{\mathbb{P}}[\int_0^T \|Z_t\|_{X_{t-}}^2] < +\infty$, $\balpha\in\mathbb{A}$, $\bp:[0,T]\rightarrow\mathcal{S}$ is a measurable mapping, $\mathbb{Q}$ is a probability measure on $\Omega$ and equations (\ref{eq:bsde_mfg_1})-(\ref{eq:bsde_mfg_4}) are satisfied $\mathbb{P}$-a.s. for all $0\le t\le T$.
\end{definition}

The following result links the solution of the McKean-Vlasov BSDE (\ref{eq:bsde_mfg_1})-(\ref{eq:bsde_mfg_4}) to the Nash equilibria of the agents.

\begin{theorem}
\label{thm:proba_characterization_ne}
Let Assumption \ref{hypo:transrateminor} and Assumption \ref{hypo:lipschitz_optimizer} hold, and let $(\br,\xi)$ be a contract. If the BSDE (\ref{eq:bsde_mfg_1})-(\ref{eq:bsde_mfg_4}) admits a solution $(\bY,\bZ,\balpha,\bp,\mathbb{Q})$, then $(\balpha,\bp)$ is a Nash equilibrium. Conversely if $(\hat\balpha,\hat \bp)$ is a Nash equilibrium, then the BSDE (\ref{eq:bsde_mfg_1})-(\ref{eq:bsde_mfg_4}) admits a solution $(\bY,\bZ,\balpha,\bp,\mathbb{Q})$ such that $\balpha = \hat\balpha$, $d\mathbb{P}\otimes dt$-a.e. and $p(t) = \hat p(t)$ $dt$-a.e.
\end{theorem}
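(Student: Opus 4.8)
The plan is to establish the two implications separately, in each case using the characterization of the agent's optimization problem via the BSDE \eqref{eq:bsde_agent_optimization} from Lemma \ref{lem:bsde_characterization}(ii), together with the Girsanov construction of $\mathbb{Q}^{(\balpha,\bp)}$ recalled in Section \ref{subsec:model_state_process}. The central observation tying everything together is that equation \eqref{eq:bsde_mfg_2} is precisely the SDE satisfied by the Dol\'eans--Dade exponential $\mathcal{E}(\bL^{(\balpha,\bp)})$ appearing in \eqref{eq:martingale_girsanov}, so that the measure $\mathbb{Q}$ defined in \eqref{eq:bsde_mfg_4} coincides with $\mathbb{Q}^{(\balpha,\bp)}$.

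\emph{From a solution of the BSDE to a Nash equilibrium.} Suppose $(\bY,\bZ,\balpha,\bp,\mathbb{Q})$ solves \eqref{eq:bsde_mfg_1}--\eqref{eq:bsde_mfg_4}. First I would identify $\mathbb{Q} = \mathbb{Q}^{(\balpha,\bp)}$: since \eqref{eq:bsde_mfg_2} is the linear SDE $d\mathcal{E}_t = \mathcal{E}_{t-}\,dL^{(\balpha,\bp)}_t$ with $\mathcal{E}_0 = 1$, uniqueness for this SDE gives $\mathcal{E}_T = \mathcal{E}(\bL^{(\balpha,\bp)})_T = \tfrac{d\mathbb{Q}^{(\balpha,\bp)}}{d\mathbb{P}}$, hence $\mathbb{Q} = \mathbb{Q}^{(\balpha,\bp)}$ by \eqref{eq:bsde_mfg_4}. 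Then \eqref{eq:bsde_mfg_4} reads $p(t) = \mathbb{E}^{\mathbb{Q}^{(\balpha,\bp)}}[X_t]$, which is exactly the consistency condition \eqref{eq:def_nasheq_cons1}. For optimality \eqref{eq:def_nasheq_optimality}: with $\bp$ now fixed, \eqref{eq:bsde_mfg_1} is the BSDE \eqref{eq:bsde_agent_optimization} associated with this flow, so Lemma \ref{lem:bsde_characterization}(ii) says its solution $(\bY,\bZ)$ yields $\inf_{\balpha'\in\mathbb{A}}J^{\br,\xi}(\balpha',\bp) = \mathbb{E}^{\mathbb{P}}[Y_0]$ with optimal control $\hat a(t,X_{t-},Z_t,p(t))$; by \eqref{eq:bsde_mfg_3} this is exactly $\balpha$, so $\balpha$ minimizes the agent's cost against $\bp$. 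The only subtlety to check is uniqueness of $(\bY,\bZ)$ for \eqref{eq:bsde_agent_optimization}, so that the $\bZ$ in our solution is the one Lemma \ref{lem:bsde_characterization} refers to; this follows from the Lipschitz estimate in Lemma \ref{lem:lip_h_hat} and the integrability conditions in Definition \ref{def:mkv_bsde_sol}, via standard BSDE theory for Markov-chain noise (\cite{cohen2008}, \cite{cohen2010}).

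\emph{From a Nash equilibrium to a solution of the BSDE.} Conversely, given $(\hat\balpha,\hat\bp)\in\mathcal{N}(\br,\xi)$, set $\bp := \hat\bp$ and let $(\bY,\bZ)$ be the unique solution of \eqref{eq:bsde_agent_optimization} for this fixed flow, which exists by Lemma \ref{lem:bsde_characterization}(ii); define $\balpha_t := \hat a(t,X_{t-},Z_t,\hat p(t))$, so \eqref{eq:bsde_mfg_1} and \eqref{eq:bsde_mfg_3} hold by construction, and let $\mathcal{E}_t := \mathcal{E}(\bL^{(\balpha,\hat\bp)})_t$ and $\mathbb{Q} := \mathbb{Q}^{(\balpha,\hat\bp)}$, so \eqref{eq:bsde_mfg_2} holds. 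It remains to verify \eqref{eq:bsde_mfg_4}, i.e. $\hat p(t) = \mathbb{E}^{\mathbb{Q}}[X_t]$, and the identifications $\balpha = \hat\balpha$ $d\mathbb{P}\otimes dt$-a.e. Here I would invoke the optimality part of Lemma \ref{lem:bsde_characterization}(ii) once more: the Nash condition \eqref{eq:def_nasheq_optimality} says $\hat\balpha$ is an optimal control for the agent's problem against $\hat\bp$, and since that problem's optimizer is characterized (via the reduced Hamiltonian having a \emph{unique} minimizer, Assumption \ref{hypo:lipschitz_optimizer}) as $\hat a(t,X_{t-},Z_t,\hat p(t)) = \balpha_t$, we get $\hat\balpha = \balpha$ up to a $d\mathbb{P}\otimes dt$-null set. (One must be slightly careful that ``the'' optimal control is unique only $d\mathbb{P}\otimes dt$-a.e., which is why the theorem only claims almost-everywhere equality.) Consequently $\mathbb{Q}^{(\hat\balpha,\hat\bp)} = \mathbb{Q}^{(\balpha,\hat\bp)} = \mathbb{Q}$, and then the second Nash condition \eqref{eq:def_nasheq_cons1}, which reads $\hat p(t) = \mathbb{E}^{\mathbb{Q}^{(\hat\balpha,\hat\bp)}}[X_t]$, becomes exactly \eqref{eq:bsde_mfg_4}. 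Finally I would confirm that the tuple meets the regularity requirements of Definition \ref{def:mkv_bsde_sol}, which again comes from the a priori estimates for the BSDE.

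\emph{Main obstacle.} The routine parts are the measure identification and the bookkeeping with Definition \ref{def:mkv_bsde_sol}. The point requiring the most care is the passage between ``$\hat\balpha$ is optimal'' and ``$\hat\balpha = \hat a(t,X_{t-},Z_t,\hat p(t))$'': one needs that \emph{every} optimal control, not merely the feedback control exhibited by the verification argument, agrees $d\mathbb{P}\otimes dt$-a.e. with the Hamiltonian minimizer. This is where Assumption \ref{hypo:lipschitz_optimizer} (uniqueness of the minimizer of $\alpha\mapsto H_i$) is essential, and it is the reason the equivalence can only be stated modulo $d\mathbb{P}\otimes dt$-null sets and $dt$-null sets for $\bp$. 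Everything else reduces to applying Lemma \ref{lem:bsde_characterization}, Lemma \ref{lem:lip_h_hat}, and uniqueness for linear SDEs driven by $\bcM$.
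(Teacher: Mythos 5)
Your overall architecture matches the paper's: the forward direction is handled exactly as in the paper (identify $\mathbb{Q}$ with $\mathbb{Q}^{(\balpha,\bp)}$ via uniqueness of the linear SDE (\ref{eq:bsde_mfg_2}), read off the consistency condition from (\ref{eq:bsde_mfg_4}), and invoke Lemma \ref{lem:bsde_characterization}(ii) for optimality), and that part of your write-up is fine. The problem is in the converse direction. You write that since the agent's problem's ``optimizer is characterized \dots as $\hat a(t,X_{t-},Z_t,\hat p(t))$,'' the Nash control $\hat\balpha$ must agree with it $d\mathbb{P}\otimes dt$-a.e. But Lemma \ref{lem:bsde_characterization}(ii) only asserts that the feedback control $\hat a(t,X_{t-},Z_t,p(t))$ \emph{is an} optimal control; it does not assert that every minimizer of $J^{\br,\xi}(\cdot,\hat\bp)$ coincides a.e.\ with it. You correctly flag this as the ``main obstacle'' and correctly identify that Assumption \ref{hypo:lipschitz_optimizer} must be the ingredient that closes it, but you never supply the argument — and that argument is essentially the entire mathematical content of the converse direction.

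The paper closes the gap as follows. It solves two BSDEs with terminal condition $-U(\xi)$: one with driver $H(s,X_{s-},Z_s,\hat\alpha_s,\hat p(s),u(r_s))$, with solution $(\bY^0,\bZ^0)$ representing $J^{\br,\xi}(\hat\balpha,\hat\bp)$, and one with the minimized driver $\hat H$, with solution $(\bY^1,\bZ^1)$ representing the infimum. Nash optimality forces $\mathbb{E}^{\mathbb{P}}[Y_0^0]=\mathbb{E}^{\mathbb{P}}[Y_0^1]$. Subtracting the two equations and regrouping the stochastic integrals so that they become a martingale under $\mathbb{Q}^{(\hat\balpha,\hat\bp)}$, the difference $Y_0^0-Y_0^1$ equals a $\mathbb{Q}^{(\hat\balpha,\hat\bp)}$-martingale increment plus $\int_0^T\bigl[H(t,X_{t-},Z^1_t,\hat\alpha_t,\hat p(t),u(r_t))-H(t,X_{t-},Z^1_t,\hat\alpha'_t,\hat p(t),u(r_t))\bigr]dt$ with $\hat\alpha'_t:=\hat a(t,X_{t-},Z^1_t,\hat p(t))$; this integrand is pointwise nonnegative by definition of $\hat\alpha'$, and taking $\mathbb{Q}^{(\hat\balpha,\hat\bp)}$-expectations shows it must vanish $d\mathbb{Q}^{(\hat\balpha,\hat\bp)}\otimes dt$-a.e. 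Uniqueness of the Hamiltonian minimizer (Assumption \ref{hypo:lipschitz_optimizer}) then gives $\hat\alpha_t=\hat\alpha'_t$ a.e., equivalence of measures transfers this to $d\mathbb{P}\otimes dt$, and uniqueness of the BSDE solution together with the Lipschitz estimate (\ref{eq:optimizer_lipschitz}) identifies $\bZ^0$ with $\bZ^1$ and hence $\hat\balpha$ with the feedback form required by (\ref{eq:bsde_mfg_3}). Without some version of this comparison argument your proof of the converse is incomplete.
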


\begin{proof}
Let $(\bY,\bZ,\balpha,\bp,\mathbb{Q})$ be a solution to the system (\ref{eq:bsde_mfg_1})-(\ref{eq:bsde_mfg_4}). We use Lemma \ref{lem:bsde_characterization} to check that $\balpha$ is the optimal control of the agent when all the agents are committed to the contract $(\br,\xi)$ and the distribution of their states is given by $\bp$. Indeed, from equations (\ref{eq:bsde_mfg_2}) and (\ref{eq:bsde_mfg_4}), we see that item (ii) in Definition \ref{def:equilibrium} is verified. Therefore $(\balpha,\bp)$ is a Nash equilibrium.

\vspace{2mm}
We now show the second part of the claim. Let $(\hat\balpha,\hat \bp)$ be a Nash equilibrium and set $\hat{\mathbb{Q}} := \mathbb{Q}^{(\hat\balpha,\hat \bp)}$.  By item (ii) in Definition \ref{def:equilibrium}, we see that (\ref{eq:bsde_mfg_2}) and (\ref{eq:bsde_mfg_4}) are satisfied. Therefore it only remains to show (\ref{eq:bsde_mfg_1}) and (\ref{eq:bsde_mfg_3}). Let us consider the BSDE:
\begin{equation}\label{eq:proof_proba_characterization_bsde1}
Y_t = -U(\xi) + \int_t^T H(s, X_{s-}, Z_s, \hat\alpha_s, \hat p(s), u(r_s)) ds - \int_t^T Z_s^*\cdot d\mathcal{M}_s.
\end{equation}
By Lemma \ref{lem:bsde_characterization}, the above BSDE admits a unique solution which we denote by $(\bY^0, \bZ^0)$. In addition, we have $\mathbb{E}^{\mathbb{P}}[Y^0_0] = J^{\br,\xi}(\hat\balpha,\hat \bp)$. On the other hand, we consider the following BSDE:
\begin{equation}\label{eq:proof_proba_characterization_bsde2}
Y_t = -U(\xi) + \int_t^T \hat H(s, X_{s-}, Z_s, \hat p(s), u(r_s)) ds - \int_t^T Z_s^*\cdot d\mathcal{M}_s.
\end{equation}
From Lemma \ref{lem:bsde_characterization}, we see that BSDE (\ref{eq:proof_proba_characterization_bsde2}) also admits a unique solution which we denote by $(\bY^1, \bZ^1)$, and we have $\mathbb{E}^{\mathbb{P}}[Y^1_0]  = \inf_{\balpha\in\mathbb{A}}J^{\br,\xi}(\balpha,\hat \bp)$. By the definition of the Nash equilibrium, we have:
\[
\hat\balpha \in \arg\inf_{\balpha\in\mathbb{A}} J^{\br,\xi}(\balpha,\hat p),
\]
which implies that $\mathbb{E}^{\mathbb{P}}[Y_0^1] = \mathbb{E}^{\mathbb{P}}[Y_0^0]$. Note that $Y_0^1$ and $Y_0^0$ are $\mathcal{F}_0$-measurable and $\mathbb{P}$ coincides with $\hat{\mathbb{Q}}$ on $\mathcal{F}_0$. Therefore we have $\mathbb{E}^{\hat{\mathbb{Q}}}[Y_0^1 - Y_0^0] = 0$. Now we set $\hat\alpha'_t:=\hat a(t,X_{t-}, Z_t^1, \hat p(t))$ which minimizes the mapping $\alpha \rightarrow H(s, X_{t-}, Z^1_t, \alpha, \hat p(t), u(r_t))$. Since $(\bY^1, \bZ^1)$ solves the BSDE (\ref{eq:proof_proba_characterization_bsde2}), we have:
\begin{equation}\label{eq:proof_proba_characterization_bsde3}
Y^1_t = -U(\xi) + \int_t^T H(s, X_{s-}, Z^1_s, \hat\alpha'_t, \hat p(s), u(r_s)) ds - \int_t^T (Z^1_s)^*\cdot d\mathcal{M}_s.
\end{equation}
Taking the difference of the BSDEs (\ref{eq:proof_proba_characterization_bsde3}) and (\ref{eq:proof_proba_characterization_bsde1}), we obtain:
\begin{align*}
Y_0^0 - Y_0^1&=\;\;\int_0^T \big[H(t, X_{t-}, Z^0_t, \hat\alpha_t, \hat p(t), u(r_t)) - H(t, X_{t-}, Z^1_t, \hat \alpha'_t, \hat p(t), u(r_t))\big]dt
- \int_0^T (Z^0_t - Z^1_t)^*\cdot d\mathcal{M}_t\\
&=\int_0^T \big[c(t, X_{t-}, \hat\alpha_t, \hat p(t)) - c(t, X_{t-}, \hat\alpha'_t, \hat p(t))\big] dt\\
&\;\; +\int_0^T\big[ X_{t-}^*(Q(t,\hat\alpha_t, \hat p(t)) - Q^0) Z^0_t - X_{t-}^*(Q(t,\hat\alpha'_t, \hat p(t)) - Q^0) Z^1_t\big]dt - \int_0^T (Z^0_t - Z^1_t)^*\cdot d\mathcal{M}_t\\
&=\int_0^T \big[c(t, X_{t-}, \hat\alpha_t, \hat p(t)) - c(t, X_{t-}, \hat\alpha'_t, \hat p(t)) + X_{t-}^*(Q(t,\hat \alpha_t, \hat p(t)) - Q(t,\hat\alpha'_t, \hat p(t)))Z_t^1\big]dt\\
&\hskip 75pt
-  \int_0^T (Z^0_t - Z^1_t)^*\cdot d\mathcal{M}^{(\hat\alpha, \hat p)}_t.
\end{align*}
Using the optimality of $\hat\alpha'_t$ and taking the expectation under the measure $\mathbb{Q}^{(\hat\balpha, \hat \bp)}$, we obtain:
\begin{equation}\label{eq:proof_th_mkv_bsde}
0 = \mathbb{E}^{\mathbb{Q}^{(\hat\balpha, \hat \bp)}}[H(t, X_{t-}, Z^1_t, \hat\alpha_t, \hat p(t), u(r_t))-H(t, X_{t-}, Z^1_t, \hat \alpha'_t, \hat p(t), u(r_t))] \ge 0.
\end{equation}
Assume that there exists a measurable subset $N$ of $[0,T]\times\Omega$ with strictly positve $d\mathbb{Q}^{(\hat\balpha, \hat \bp)}\otimes dt$ measure, such that $\hat\alpha'_t \neq \hat\alpha_t$ for $(\omega, t)\in N$. By Assumption \ref{hypo:lipschitz_optimizer}, the mapping $\alpha \rightarrow H(t, X_{t-}, Z_t^{1}, \alpha, \hat p(t), u(r_t))$ admits a unique minimizer and therefore for all $(\omega,t) \in N$, we have:
\[
H(t, X_{t-}, Z^1_t, \hat\alpha_t, \hat p(t), u(r_t)) > H(t, X_{t-}, Z^1_t, \hat \alpha'_t, \hat p(t), u(r_t)).
\]
Piggybacking on the argument laid out above, we see that the second inequality is strict in (\ref{eq:proof_th_mkv_bsde}) which leads to a contradiction.Therefore we have $\hat\alpha'_t = \hat\alpha_t$, $d\mathbb{Q}^{(\hat\balpha, \hat \bp)}\otimes dt$-a.e. Since $\mathbb{Q}^{(\hat\balpha, \hat \bp)}$ and $\mathbb{P}$ are equivalent, we have $\hat\alpha'_t = \hat\alpha_t$, $d\mathbb{P}\otimes dt$-a.e.

\vspace{2mm}
Comparing BSDEs (\ref{eq:proof_proba_characterization_bsde1}) and (\ref{eq:proof_proba_characterization_bsde3}) and using the uniqueness of the solution, we get $\mathbb{E}[\int_0^T\|Z_t^1 - Z_t^0\|^2_{X_{t-}}] = 0$. Now by the regularity of $\hat a$ in Assumption \ref{hypo:lipschitz_optimizer}, we have:
\begin{align*}
\mathbb{E}\left[\int_0^T\|\hat \alpha_t - \hat a(t, X_{t-}, Z_t^0, \hat p(t))\|^2 dt\right] & = \mathbb{E}\left[\int_0^T\|\hat \alpha'_t - \hat a(t, X_{t-}, Z_t^0, \hat p(t))\|^2 dt\right] \\
& =\mathbb{E}\left[\int_0^T\|\hat a(t,X_{t-}, Z_t^1, \hat p(t)) -  \hat a(t, X_{t-}, Z_t^0, \hat p(t))\|^2 dt\right]\\
& \le  C \mathbb{E}[\int_0^T\|Z_t^1 - Z_t^0\|^2_{X_{t-}}]\\
&  = 0.
\end{align*}
Therefore we have $\hat \alpha_t = \hat a(t, X_{t-}, Z_t^0, \hat p(t))$, $d\mathbb{P}\otimes dt$-a.e., which immediately implies (\ref{eq:bsde_mfg_1}) and (\ref{eq:bsde_mfg_3}). This completes the proof.
\end{proof}

\subsection{Principal's Optimal Contracting Problem}
\label{subsec:mkv_ctrl_optimal_contract}

We now turn to the principal's optimal choice of the contract. Recall that we have defined $\mathcal{C}$ to be the collection of all contracts that result in at least one Nash equilibrium. In addition, in order to model the fact that agents are not accepting contracts that will incur a cost above a certain (reservation) threshold, we further restrict our optimization to the collection of Nash equilibria in which the agent's expected total cost is below a given threshold $\kappa$. Therefore we propose to solve the following optimization problem for the principal:
\[
V(\kappa) := \inf_{(\br,\xi)\in\mathcal{C}}\inf_{\substack{(\balpha,\bp) \in \mathcal{N}(\br,\xi)\\ J^{\br,\xi}(\balpha,\bp) \le \kappa}}\mathbb{E}^{\mathbb{Q}^{(\balpha,\bp)}}\left[\int_0^T [c_0(t, p(t)) + r_t] dt +C_0(p(T)) +\xi\right],
\]
where we adopt the convention that the infimum over an empty set equals $+\infty$. 

\vspace{2mm}
Formulated in this way, the problem seems rather intractable. However, thanks to the probabilistic characterization of agents' Nash equilibria stated in Theorem \ref{thm:proba_characterization_ne}, it is possible to transform it into a McKean-Vlasov control problem. Let us denote by $\mathcal{H}_{X}^2$ the collection of $\mathbb{F}-$adapted and left-continuous processes $\bZ$ such that $Z_t \in \mathbb{R}^m$ for $0\le t\le T$ and $\mathbb{E}[\int_0^T \|Z_t\|_{X_{t-}} dt] < +\infty$. We also denote by $\mathcal{R}$ the collection of $\mathbb{F}-$predictable process taking values in $\mathbb{R}$. Given $\bZ\in\mathcal{H}_{X}^2$, $\br \in \mathcal{R}$ and $Y_0$ a $\mathcal{F}_0$-measurable random variable, we consider the following SDE of McKean-Vlasov type:
\begin{align}
Y_t =&\;\;Y_0-\int_0^t\hat H(s, X_{s-}, Z_s, p(s), u(r_s)) ds + \int_0^t Z_s^* d\mathcal{M}_s,\label{eq:forward_y_mkv1}\\
\mathcal{E}_t =&\;\;1 + \int_0^t \mathcal{E}_{s-} X_{s-}^* (Q(s, \alpha_s, p(s)) - Q^0)\psi_s^+ d\mathcal{M}_s,
\label{eq:forward_y_mkv2}\\
\alpha_t = &\;\;\hat a(t,X_{t-},Z_t,p(t)),\label{eq:forward_y_mkv3}\\
p(t) = &\;\;\mathbb{E}^{\mathbb{Q}}[X_t],\;\;\frac{d\mathbb{Q}}{d\mathbb{P}} = \mathcal{E}_T.
\label{eq:forward_y_mkv4}
\end{align}
These are exactly the same equations as in the McKean-Vlasov BSDE (\ref{eq:bsde_mfg_1})-(\ref{eq:bsde_mfg_4}), except that we write the dynamic of $\bY$ in the forward direction of time. Let us denote its solution by $(\bY^{\bZ,br, Y_0)},\bZ^{(\bZ,\br,Y_0)},\balpha^{(\bZ,\br,Y_0)}, \bp^{(\bZ,\br,Y_0)}, \mathbb{P}^{(\bZ,\br,Y_0)})$ and the expectation under $\mathbb{P}^{(\bZ,\br,Y_0)}$ by $\mathbb{E}^{(\bZ,\br,Y_0)}$ and let us consider the following optimal control problem:
\begin{equation}
\label{eq:principal_pb_new}
\tilde V(\kappa) := \inf_{\mathbb{E}^{\mathbb{P}}[Y_0] \le \kappa} \inf_{\substack{\bZ\in\mathcal{H}_{X}^2\\ \br \in \mathcal{R}}}\mathbb{E}^{(\bZ,\br,Y_0)}\bigg[\int_0^T [c_0(t, p^{(\bZ,\br,Y_0)}(t)) + r_t] dt +C_0(p^{(\bZ,\br,Y_0)}(T))+ U^{-1}(-Y_T^{(\bZ,\br,Y_0)})\bigg].
\end{equation}
As a direct consequence of Theorem \ref{thm:proba_characterization_ne}, we have the following result:
\begin{theorem}\label{thm:principal_pb}
Let Assumption \ref{hypo:transrateminor} and Assumption \ref{hypo:lipschitz_optimizer} hold. Then $\tilde V(\kappa) = V(\kappa)$.
\end{theorem}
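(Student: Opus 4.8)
The plan is to establish the two inequalities $\tilde V(\kappa)\le V(\kappa)$ and $V(\kappa)\le\tilde V(\kappa)$, each being a dictionary translation carried out with Theorem \ref{thm:proba_characterization_ne} and Lemma \ref{lem:bsde_characterization}. The conceptual content is the observation, going back to Sannikov and exploited in \cite{cvitanic2016moral, elie2016pa}, that specifying the terminal payment $\xi$ of a contract is equivalent to specifying the initial value $Y_0$ and the martingale integrand $\bZ$ of the agents' value process: from a solution $(\bY,\bZ,\balpha,\bp,\mathbb{Q})$ of the McKean--Vlasov BSDE (\ref{eq:bsde_mfg_1})--(\ref{eq:bsde_mfg_4}) one reads off $(\bZ,Y_0)$ and runs the $\bY$-equation forward to get a solution of (\ref{eq:forward_y_mkv1})--(\ref{eq:forward_y_mkv4}); conversely, from data $(\bZ,\br,Y_0)$ and the forward solution one sets $\xi:=U^{-1}(-Y_T^{(\bZ,\br,Y_0)})$, so that $Y_T^{(\bZ,\br,Y_0)}=-U(\xi)$ and rewriting the $\bY$-equation backward recovers exactly (\ref{eq:bsde_mfg_1})--(\ref{eq:bsde_mfg_4}) with terminal datum $-U(\xi)$. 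Note also that the measure $\mathbb{Q}$ built through (\ref{eq:bsde_mfg_2}) (equivalently (\ref{eq:forward_y_mkv2})) is precisely $\mathbb{Q}^{(\balpha,\bp)}$.

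For $\tilde V(\kappa)\le V(\kappa)$, fix an admissible contract $(\br,\xi)\in\mathcal{C}$ and a Nash equilibrium $(\balpha,\bp)\in\mathcal{N}(\br,\xi)$ with $J^{\br,\xi}(\balpha,\bp)\le\kappa$. The converse part of Theorem \ref{thm:proba_characterization_ne} produces a solution $(\bY,\bZ,\balpha,\bp,\mathbb{Q})$ of (\ref{eq:bsde_mfg_1})--(\ref{eq:bsde_mfg_4}), and Lemma \ref{lem:bsde_characterization}(ii) gives $\mathbb{E}^{\mathbb{P}}[Y_0]=\inf_{\balpha'}J^{\br,\xi}(\balpha',\bp)=J^{\br,\xi}(\balpha,\bp)\le\kappa$, the middle equality because $\balpha$ is an equilibrium best response against $\bp$. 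Then $(\bZ,\br,Y_0)$ is feasible for $\tilde V(\kappa)$, and by uniqueness of the forward system its solution is this very quintuple, so $\mathbb{P}^{(\bZ,\br,Y_0)}=\mathbb{Q}=\mathbb{Q}^{(\balpha,\bp)}$, $\bp^{(\bZ,\br,Y_0)}=\bp$ and $U^{-1}(-Y_T^{(\bZ,\br,Y_0)})=\xi$. Hence the objective of (\ref{eq:principal_pb_new}) at $(\bZ,\br,Y_0)$ equals the principal's cost $\mathbb{E}^{\mathbb{Q}^{(\balpha,\bp)}}\left[\int_0^T (c_0(t,p(t))+r_t)\,dt+C_0(p(T))+\xi\right]$, and taking infima yields the inequality.

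For $V(\kappa)\le\tilde V(\kappa)$, fix $(\bZ,\br,Y_0)$ with $\bZ\in\mathcal{H}_{X}^{2}$, $\br\in\mathcal{R}$, $\mathbb{E}^{\mathbb{P}}[Y_0]\le\kappa$, let $(\bY,\bZ,\balpha,\bp,\mathbb{P}^{(\bZ,\br,Y_0)})$ be the forward solution, and set $\xi:=U^{-1}(-Y_T)$. Writing the $\bY$-equation backward shows this quintuple solves (\ref{eq:bsde_mfg_1})--(\ref{eq:bsde_mfg_4}) for the contract $(\br,\xi)$, so the direct part of Theorem \ref{thm:proba_characterization_ne} makes $(\balpha,\bp)$ a Nash equilibrium for $(\br,\xi)$, whence $(\br,\xi)\in\mathcal{C}$; Lemma \ref{lem:bsde_characterization}(ii) then gives $J^{\br,\xi}(\balpha,\bp)=\mathbb{E}^{\mathbb{P}}[Y_0]\le\kappa$, so this equilibrium is feasible in the inner infimum of (\ref{eq:principal_pb}) and its contribution to $V(\kappa)$ equals the objective of (\ref{eq:principal_pb_new}) at $(\bZ,\br,Y_0)$ (using $\mathbb{Q}^{(\balpha,\bp)}=\mathbb{P}^{(\bZ,\br,Y_0)}$ and $\xi=U^{-1}(-Y_T)$). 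Taking infima and combining with the previous step proves $\tilde V(\kappa)=V(\kappa)$.

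The argument is conceptually short; the real work is the bookkeeping behind it, which is where I expect the obstacles. First, one needs the forward McKean--Vlasov system (\ref{eq:forward_y_mkv1})--(\ref{eq:forward_y_mkv4}) to be well posed: existence and uniqueness follow from the same contraction argument as for the BSDE (\ref{eq:bsde_mfg_1})--(\ref{eq:bsde_mfg_4}) in \cite{carmona2018discrete}, since the $(\mathcal{E},\balpha,\bp)$-subsystem is unchanged and, $\bZ$ being prescribed, the $\bY$-component is obtained by plain integration. Second, the admissibility of $\xi:=U^{-1}(-Y_T)$ as a contract: $\mathcal{F}_T$-measurability is automatic, but $\mathbb{P}$-square-integrability requires hypotheses on $U$ (for instance that $U$ is a bijection of $\mathbb{R}$ with $U^{-1}$ of at most linear growth) together with the $L^2$-bound on $\bY$ from the BSDE theory; conversely, a contract with square-integrable $\xi$ must return data with $\bZ\in\mathcal{H}_{X}^{2}$ and $Y_0\in L^2(\mathbb{P})$, again from that theory. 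Third, Theorem \ref{thm:proba_characterization_ne} identifies the equilibrium $(\balpha,\bp)$ only up to $d\mathbb{P}\otimes dt$- and $dt$-null sets; since $\mathbb{Q}^{(\balpha,\bp)}\ll\mathbb{P}$ and all relevant functionals depend on $\balpha$ and $\bp$ only through time integrals, these null sets are harmless, but this must be noted so that the two objective functionals genuinely coincide.
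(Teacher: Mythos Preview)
Your proposal is correct and is exactly the argument the paper intends: the paper states Theorem \ref{thm:principal_pb} as ``a direct consequence of Theorem \ref{thm:proba_characterization_ne}'' without spelling out the two inequalities, and you have simply written out that direct consequence via the forward/backward dictionary between the systems (\ref{eq:bsde_mfg_1})--(\ref{eq:bsde_mfg_4}) and (\ref{eq:forward_y_mkv1})--(\ref{eq:forward_y_mkv4}), together with Lemma \ref{lem:bsde_characterization}(ii) to identify the reservation constraint $\mathbb{E}^{\mathbb{P}}[Y_0]\le\kappa$. The bookkeeping caveats you flag (well-posedness of the forward system, integrability of $\xi=U^{-1}(-Y_T)$, and the null-set identifications from Theorem \ref{thm:proba_characterization_ne}) are genuine and the paper glosses over them, but they do not alter the approach.
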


\section{\textbf{Solving the Linear-Quadratic Model}}
\label{sec:lq_model}

Solving the contract theory problem in the full generality considered so far seems out of reach. So
in this section, we focus on a special setup of the principal agent problem where the transition rate has a linear structure and the cost function is quadratic in the control. When the agents are risk-neutral in term of the utility of their terminal reward, we show that the principal's optimal contracting problem can be further reduced to a deterministic control problem on the space of probability distributions of the agents' states.

\subsection{Model Setup}
\label{subsec:lq_model_setup}

We set the initial distribution of the agents to be $p^{\circ} \in \mathcal{S}$. Each agent is allowed to pick a control $\alpha$ which belongs to the bounded interval $A := [\ubar \alpha, \bar \alpha]\subset\mathbb{R}^+$. We assume that the transition rate is a linear function of the control and we define:
\begin{align}
q(t,i,j,\alpha,p) := \bar q_{i,j}(t,p) + \lambda_{i,j}(\alpha - \ubar\alpha),\;\;\text{for}\;\;i\neq j,\label{eq:q_matrix_lq}\\
q(t,i,i,\alpha,p) := -\sum_{j\neq i}q(t,i,j,\alpha,p),
\end{align}
where we assume that $\lambda_{i,j} \in \mathbb{R}^+$ for all $i\neq j$, $\sum_{j\neq i}\lambda_{i,j} > 0$ for all $i$, and $\bar q_{i,j}:[0,T]\times\mathcal{S}\rightarrow\mathbb{R}^+$ are continuous mappings for all $i\neq j$. We assume that the cost function of a typical agent (not including the utility derived from the payment stream) takes the following form:
\begin{equation}\label{eq:agent_cost_lq}
c(t, e_i, \alpha, p) := c_1(t, e_i, p) + \frac{\gamma_i}{2}\alpha^2,
\end{equation}
where $\gamma_i>0$, and the mapping $(t,p)\rightarrow c_1(t, e_i, p)$ is continuous for all $i\in\{1,\dots,m\}$. Finally we define the agent's utility function of terminal reward to be $U(\xi) = \xi$ and the utility function of continuous reward $u$ to be a continuous, concave and increasing function.

\vspace{2mm}
Under the setup outlined above, it is straightforward to compute the optimizer of the Hamiltonian $H$ defined in (\ref{eq:hamiltonian_def_agent}). We get:
\begin{equation}\label{eq:hamiltonian_optimizer_lq}
\hat a(t,e_i, z,p) = \hat a(e_i, z) = b\left(-\frac{1}{\gamma_i}\sum_{j\neq i}\lambda_{i,j}(z_j- z_i)\right),
\end{equation}
for $i\in \{1,\dots,m\}$, where we have defined $b(z) := \min\{\max\{z,\ubar\alpha\},\bar\alpha\}$.

\subsection{Reduction to the Optimal Control on Flows of Probability Measures}
\label{subsec:lq_model_ctrl_flow}

We now proceed to reduce the principal's optimal contracting problem to a deterministic control problem on the flow of the probability measures corresponding to a continuous-time Markov chain. From the SDE (\ref{eq:forward_y_mkv1}) and the definition of $\hat H$, we have:

\begin{align*}
Y^{(\bZ,\br,Y_0)}_T =&\;\; Y_0 - \int_0^T [c(t, X_t,\hat a(X_{t-}, Z_t), p^{(\bZ,\br,Y_0)}(t)) - u(r_t)] dt+ \int_0^T Z_t^* d\mathcal{M}_t^{(\bZ,\br,Y_0)},
\end{align*}
where $\\bcM^{(Z,r,Y_0)}$ is a martingale under the measure $\mathbb{P}^{(\bZ,\br,Y_0)}$. Now using $U^{-1}(y) = y$ and injecting the SDE into the objective function of the principal defined in (\ref{eq:principal_pb}), we may rewrite the principal's optimal contracting problem:

\begin{align*}
V(\kappa)
 =&  \inf_{\mathbb{E}^{\mathbb{P}}[Y_0] \le \kappa} \inf_{\substack{\bZ\in\mathcal{H}_{X}^2\\ \br \in \mathcal{R}}}\mathbb{E}^{(\bZ,\br,Y_0)}\bigg[\int_0^T \big[c_0(t, p^{(\bZ,\br,Y_0)}(t)) + c(t, X_t,\hat a(X_{t-}, Z_t), p^{(\bZ,\br,Y_0)}(t))\big] dt\\ 
 &\quad\quad\quad\quad\quad\quad\quad\quad\quad\quad+\int _0^T \big[r_t - u(r_t)\big] dt +C_0(p^{(\bZ,\br,Y_0)}(T)) - Y_0\bigg]\\
=& -\kappa + \inf_{\substack{\bZ\in\mathcal{H}_{X}^2\\ \br \in \mathcal{R}}}\mathbb{E}^{(\bZ,\br,Y_0)}\bigg[\int_0^T \big[c_0(t, p^{(\bZ,\br,Y_0)}(t)) + c(t, X_t,\hat a(X_{t-}, Z_t), p^{(\bZ,\br,Y_0)}(t))\big] dt\\
 &\quad\quad\quad\quad\quad\quad\quad\quad\quad\quad+\int _0^T \big[r_t - u(r_t)\big] dt +C_0(p^{(\bZ,\br,Y_0)}(T))\bigg].
\end{align*}
Here we have used the equality $\mathbb{E}^{\mathbb{P}}[Y_0] = \mathbb{E}^{(\bZ,\br,Y_0)}[Y_0]$. Notice that both the transition rate matrix $Q$ and the optimizer $\hat a$ do not depend on the reward $\br$ or the agent's expected total cost $Y_0$, so we can drop the dependency of $p^{(\bZ,\br,Y_0)}$, $\mathbb{P}^{(\bZ,\br,Y_0)}$ and $\alpha^{(\bZ,\br,Y_0)}$ on $\br$ and $Y_0$. We can also isolate the optimal choice of $\br$ from the principal's optimization problem. Indeed, let $\hat r$ be a minimizer of the mapping $r \rightarrow r - u(r)$. It is immediately clear that for the principal it is optimal to choose $r_t = \hat r$ for $t\in [0, T]$. It follows that:

\begin{align*}
V(\kappa) =&\inf_{\bZ\in\mathcal{H}_{X}^2}\mathbb{E}^{\bZ}\bigg[\int_0^T \big[c_0(t, p^{\bZ}(t)) + \sum_{i=1}^m\mathbbm{1}(X_t = e_i)[c_1(t, e_i ,p^{\bZ}(t))  + \frac{\gamma_i}{2} \hat a^2(e_i, Z_t)]\big] dt\\
&\hskip 50pt+C_0(p^{\bZ}(T))\bigg] -\kappa + T (\hat r - u(\hat r)).
\end{align*}
We now focus on:

$$
W :=\inf_{\bZ\in\mathcal{H}_{X}^2}\mathbb{E}^{\bZ}\bigg[\int_0^T \big[c_0(t, p^{\bZ}(t)) + \sum_{i=1}^m\mathbbm{1}(X_t = e_i)[c_1(t, e_i ,p^{\bZ}(t))  + \frac{\gamma_i}{2} \hat a^2(e_i, Z_t)]\big] dt +C_0(p^{\bZ}(T))\bigg],
$$
where $p^{\bZ}(t) = \mathbb{P}^{\bZ}[X_t]$ and under $\mathbb{P}^{\bZ}$, $\bX$ has the following decomposition with $\bcM^{(Z)}$ being a $\mathbb{P}^{\bZ}$-martingale:
\[
X_t = X_0 + \int_0^t Q^*(s, \hat a(X_{s-}, Z_s), p^{\bZ}(s)) X_{s-} ds + \mathcal{M}_t^{\bZ}.
\]
The key observation is that the control $\bZ$ affects the value function only through the optimal control $\alpha^{\bZ}_t := \hat a(t, X_{t-}, Z_t)$ and the mapping $\mathcal{H}_X^2 \ni \bZ \rightarrow \alpha^{\bZ} \in \mathbb{A}$ is surjective. For any $\bZ \in \mathcal{H}_X^2$, it is clear from the definition of $\hat a$ that $\alpha^{\bZ} \in \mathbb{A}$. On the other hand, given an arbitrary $\balpha \in \mathbb{A}$, we set the $j$-th component of the process $\bZ$ to be:
\begin{equation}\label{eq:conversion_z2alpha}
Z^j_t := \sum_{i,i\neq j}\mathbbm{1}(X_{t-} = e_i) \frac{\gamma_i\alpha_t}{(m-1)\sum_{k,k\neq i}\lambda_{i,k}}.
\end{equation}
By the boundedness of $\balpha$ we have $\bZ \in \mathcal{H}_X^2$ and it is plain to verify that $\hat a(t, X_{t-}, Z_t) = \alpha_t$. Therefore we can transform the optimization problem $W$ into:
\begin{equation}\label{eq:w}
W = \inf_{\balpha\in\mathbb{A}}I(\balpha),
\end{equation} 
where we define:
\begin{equation}\label{eq:principal_pb_tr1}
I(\balpha):=\mathbb{E}^{\balpha}\bigg[\int_0^T \big[c_0(t, p^{\balpha}(t)) + \sum_{i=1}^m \mathbbm{1}(X_t = e_i)[c_1(t, e_i ,p^{\balpha}(t))  + \frac{\gamma_i}{2} \alpha^2_t]\big]\;dt +C_0(p^{\balpha}(T))\bigg].
\end{equation}
Here, with a mild abuse of notation, we denote by $\mathbb{E}^{\balpha}$ the expectation under $\mathbb{P}^{\balpha)}$, and $\mathbb{P}^{\balpha}$ is the probability measure defined by the following McKean-Vlasov SDE:
\begin{align}
d\mathcal{E}_t^{\balpha} =&\;\; 1 + \int_0^t \mathcal{E}_{s-}X_{s-}^* (Q(s, \alpha_s, p^{\balpha\balpha}(s)) - Q^0)\psi_s^+ d\mathcal{M}_s,\label{eq:principal_pb_tr1_dyn_eq1}\\
\frac{d\mathbb{P}^{\balpha}}{d\mathbb{P}} =&\;\; \mathcal{E}_T^{\balpha},\quad p^{\balpha}(t) = \mathbb{E}^{\balpha}[X_t].\label{eq:principal_pb_tr1_dyn_eq2}
\end{align}
Recall that under $\mathbb{P}^{\balpha}$, $\bX$ has the decomposition:
\[
X_t = X_0 + \int_0^t Q^*(s, \alpha_s, p^{\balpha}(s)) X_{s-} ds + \mathcal{M}^{\balpha}_t,
\]
where $\bcM^{\balpha}$ is a $\mathbb{P}^{\balpha}$-martingale. Taking expectation under $\mathbb{P}^{\balpha}$, we have:
\begin{align*}
p^{\balpha}(t) =&\;\;p^{\balpha}(0) + \int_0^t \mathbb{E}^{\balpha}[Q^*(s, \alpha_s, p^{\balpha}(s)) X_{s-}]\; ds\\
 =&\;\;p^{\circ} + \int_0^t \sum_{j=1}^m p_j^{\balpha}(s) \mathbb{E}^{\balpha}[Q^*(s, \alpha_s, p^{\balpha}(s)) \cdot e_j | X_{s-} = e_j]\;ds.
 \end{align*}
Rewriting this in terms of the coordinates $p_i^{\balpha}(t)$ of $p^{\balpha}(t)$ and using the linear structure of $Q$, we have:
\begin{align*}
p_i^{\balpha}(s)=&\;\; p^{\circ}_i + \int_0^t \sum_{j=1}^m p_j^{\balpha}(s) \mathbb{E}^{\balpha}[e_j^*\cdot Q(s, \alpha_s, p^{\balpha}(s))  e_i | X_{s-} = e_j]\;ds \\
=&\;\;p^{\circ}_i + \int_0^t \sum_{j=1}^m p_j^{\balpha}(s) \mathbb{E}^{\balpha}[q(s,j,i,\alpha_s,p^{\balpha}(s)) | X_{s-} = e_j]\;ds \\
= &\;\; p^{\circ}_i + \int_0^t \sum_{j,j\neq i} p_j^{\balpha}(s) \mathbb{E}^{\balpha}[q(s,j,i,\alpha_s,p^{\balpha}(s)) | X_{s-} = e_j]\;ds \\
& \hskip 50pt - \int_0^t \sum_{j,j\neq i} p_i^{\balpha}(s) \mathbb{E}^{\balpha}[q(s,i,j,\alpha_s,p^{\balpha}(s)) | X_{s-} = e_i]\;ds \\
=&\;\; p^{\circ}_i + \int_0^t \sum_{j,j\neq i} p_j^{\balpha}(s) \left[\lambda_{j,i}(\mathbb{E}^{\balpha}[\alpha_s | X_{s-} = e_j] - \ubar\alpha) + \bar q_{j,i}(s,p^{\balpha}(s))\right]\;ds\\
&\hskip 20pt  - \int_0^t \sum_{j,j\neq i} p_i^{\balpha}(s) \left[\lambda_{i,j}(\mathbb{E}^{\balpha}[\alpha_s | X_{s-} = e_i] - \ubar\alpha) + \bar q_{i,j}(s,p^{\balpha}(s))\right]\;ds.
\end{align*}
We see that the dynamics of $p^{\balpha}$ are completely driven by the deterministic processes $t\rightarrow\mathbb{E}^{\balpha}[\alpha_t|X_{t-} = e_i]$ for $i\in \{1,\dots,m\}$.  From now on, we denote $\tilde\alpha_t^i:=\mathbb{E}^{\balpha}[\alpha_t|X_{t-} = e_i]$ and $\tilde\alpha_t := [\tilde\alpha_t^1,\dots, \tilde\alpha_t^m]$. By Jensen's inequality, for all $\balpha\in\mathbb{A}$ we have:
\begin{align*}
I(\balpha) =&\;\;\int_0^T \bigg[c_0(t,p_t^{\balpha}) + \sum_{i=1}^m \big( c_1(t,e_i, p_t^{\balpha}) +  \frac{\gamma_i}{2} \mathbb{E}^{\balpha}[\alpha^2_t|X_{t-} = e_i]\big)p_i^{\balpha}(t) \bigg] dt\\
&\;\; + C_0(p^{\balpha}(T))\\
\ge&\;\;\int_0^T\bigg[c_0(t,p_t^{\balpha}) + \sum_{i=1}^m \big(c_1(t,e_i, p_t^{\balpha}) +  \frac{\gamma_i}{2} (\mathbb{E}^{\balpha}[\alpha_t|X_{t-} = e_i])^2\big)p_i^{\balpha}(t) \bigg] dt + C_0(p^{(\alpha)}(T)).
\end{align*}
This leads to the deterministic control problem:

\begin{equation}
\label{eq:w_tilde}
\tilde W := \inf_{\tilde{\balpha}\in\tilde{\mathbb{A}}} \tilde I(\tilde\balpha),
\end{equation}
where we denote by $\tilde{\mathbb{A}}$ the collection of all measurable mappings from $[0,T]$ to $A^m$, and $\pi^{\tilde\balpha}$ the solution to the following system of coupled ODEs:

\begin{equation}\label{eq:principal_pb_det_dyn}
\begin{aligned}
\frac{d\pi_i^{\tilde\balpha}(t)}{dt} = &\;\; \sum_{j,j\neq i} \pi_j^{\tilde\balpha}(t) \left[\lambda_{j,i}(\tilde\alpha_t^j - \ubar\alpha) + \bar q_{j,i}(t,\pi^{\tilde\balpha}(t))\right]
-\sum_{j,j\neq i}\pi_i^{\tilde\balpha}(t) \left[\lambda_{i,j}(\tilde\alpha_t^i - \ubar\alpha) + \bar q_{i,j}(t,\pi^{\tilde\balpha}(t))\right],\\
\pi^{\tilde\balpha}(0) =&\;\; p^{\circ},
\end{aligned}
\end{equation}
and finally the objective function $\tilde I$ is given by:

\begin{equation}
\label{eq:principal_pb_det_obj}
\tilde I(\tilde\balpha) :=\int_0^T\bigg[c_0(t,\pi^{\tilde\balpha}(t)) + \sum_{i=1}^m [c_1(t,e_i, \pi^{\tilde\balpha}(t)) + \frac{\gamma_i}{2} (\tilde\alpha_t^i)^2]\pi_i^{\tilde\balpha}(t) \bigg] dt + C_0(\pi^{\tilde\balpha}(T)).
\end{equation}
The following results show that $W$ and $\tilde W$ are two equivalent optimization problems.

\begin{proposition}
\label{prop:optimal_contracting}
We have $W = \tilde W$. If $\tilde\balpha$ is a solution to the optimization problem $\tilde W$, then the predictable process $\balpha$ defined by $\alpha_t = \sum_{i=1}^m\mathbbm{1}(X_{t-}=e_i)\tilde\alpha_t^i$ is an optimal control for $W$. Moreover, under the probability measure at the optimum, the agent's state evolves as a continuous-time Markov chain. 
\end{proposition}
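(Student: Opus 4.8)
The plan is to establish the two inequalities $W\ge\tilde W$ and $W\le\tilde W$ separately, and in the process to exhibit the correspondence $\balpha\leftrightarrow\tilde\balpha$ between admissible controls and relaxed (feedback) controls that preserves the flow of the state distribution; the optimality statement and the Markov property will then follow by inspection. The analytic heart of the argument—the ODE satisfied by $p^{\balpha}$ and the Jensen bound $I(\balpha)\ge\tilde I(\tilde\balpha)$—has already been carried out in the text preceding the statement, so what remains is to package it correctly.

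First I would prove $W\ge\tilde W$. Fix $\balpha\in\mathbb{A}$ and set $\tilde\alpha^i_t:=\mathbb{E}^{\balpha}[\alpha_t\mid X_{t-}=e_i]$ as in the text. Since $A=[\ubar\alpha,\bar\alpha]$ is a closed interval, $\tilde\alpha^i_t\in A$, and the joint measurability of $(t,\omega)\mapsto\alpha_t(\omega)$ together with Fubini gives measurability of $t\mapsto\tilde\alpha_t$; hence $\tilde\balpha\in\tilde{\mathbb{A}}$. The computation preceding the statement shows that the coordinates of $p^{\balpha}$ solve the ODE system (\ref{eq:principal_pb_det_dyn}) with data $\tilde\balpha$, so by well-posedness of that system $p^{\balpha}=\pi^{\tilde\balpha}$. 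Substituting this identity into the Jensen inequality already derived yields $I(\balpha)\ge\tilde I(\tilde\balpha)\ge\tilde W$, and taking the infimum over $\balpha\in\mathbb{A}$ gives $W\ge\tilde W$.

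Next I would prove $W\le\tilde W$ together with the optimality claim. Fix $\tilde\balpha\in\tilde{\mathbb{A}}$ and define $\balpha$ by $\alpha_t:=\sum_{i=1}^m\mathbbm{1}(X_{t-}=e_i)\tilde\alpha^i_t$, which is $\mathbb{F}$-predictable and $A$-valued, hence lies in $\mathbb{A}$. Because $\alpha_t$ is a deterministic function of $X_{t-}$, on $\{X_{t-}=e_i\}$ one has $\mathbb{E}^{\balpha}[\alpha_t\mid X_{t-}=e_i]=\tilde\alpha^i_t$ and $\mathbb{E}^{\balpha}[\alpha_t^2\mid X_{t-}=e_i]=(\tilde\alpha^i_t)^2$; the first identity forces $p^{\balpha}$ to solve (\ref{eq:principal_pb_det_dyn}) with data $\tilde\balpha$, so $p^{\balpha}=\pi^{\tilde\balpha}$, and the second turns the Jensen step into an equality, so $I(\balpha)=\tilde I(\tilde\balpha)$. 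Thus $W\le\tilde I(\tilde\balpha)$ for every $\tilde\balpha$, giving $W\le\tilde W$; combined with the previous step, $W=\tilde W$. If moreover $\tilde\balpha$ is optimal for $\tilde W$, then the lifted $\balpha$ satisfies $I(\balpha)=\tilde I(\tilde\balpha)=\tilde W=W$, so it is optimal for $W$. Finally, this optimal $\balpha$ is a Markov control $\alpha_t=\phi(t,X_{t-})$ with $\phi(t,e_i)=\tilde\alpha^i_t$, and its associated flow $p^{\balpha}=\pi^{\tilde\balpha}$ is a deterministic curve in $\mathcal{S}$; hence $\mathbb{P}^{\balpha}$ coincides with $\mathbb{Q}^{(\balpha,\pi^{\tilde\balpha})}$ and the stochastic transition rate $q(t,i,j,\tilde\alpha^i_t,\pi^{\tilde\balpha}(t))$ is a deterministic function of $(t,i,j)$, so by the discussion of Markov controls in Section \ref{subsec:model_state_process} the canonical process $\bX$ is a time-inhomogeneous continuous-time Markov chain under $\mathbb{P}^{\balpha}$ with these transition rates.

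The main obstacle is not conceptual but technical: the cleanest formulation of the equivalence relies on the forward ODE (\ref{eq:principal_pb_det_dyn}) being well-posed, whereas the coefficients $\bar q_{i,j}$ are only assumed continuous in $p$. I would handle this either by strengthening the assumption on $\bar q_{i,j}$ to a local Lipschitz condition, or—preferably—by defining $\pi^{\tilde\balpha}$ directly as the flow $p^{\balpha}$ produced by the lifted control $\balpha$, whose existence is guaranteed by the McKean–Vlasov analysis of Section \ref{sec:mkv_ctrl}, so that uniqueness of the ODE is used only along the specific curves that actually arise. A secondary point requiring care is the measurable-selection argument ensuring that $t\mapsto\mathbb{E}^{\balpha}[\alpha_t\mid X_{t-}=e_i]$ is a genuine element of $\tilde{\mathbb{A}}$, which follows from a regular conditional distribution together with the closedness and convexity of $A$.
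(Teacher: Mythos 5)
Your proposal is correct and follows essentially the same route as the paper: the inequality $W\ge\tilde W$ is read off from the Jensen computation preceding the statement, and the reverse inequality comes from lifting $\tilde\balpha$ to the feedback control $\alpha_t=\sum_i\mathbbm{1}(X_{t-}=e_i)\tilde\alpha^i_t$ and identifying $p^{\balpha}$ with $\pi^{\tilde\balpha}$ via uniqueness of the Kolmogorov/forward ODE, after which the Jensen step is an equality. The only (minor) divergence is directional — the paper first solves the ODE, builds $\tilde{\mathbb P}$ by Girsanov from $\pi^{\tilde\balpha}$, and then verifies it is the McKean--Vlasov solution, whereas you start from $\mathbb P^{\balpha}$ and show its flow solves the ODE — and your remark that the quoted uniqueness really needs $\bar q_{i,j}$ Lipschitz in $p$ (the paper only assumes continuity) is a legitimate point the paper glosses over with ``standard results on ordinary differential equations.''
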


\begin{proof}
From the derivation above, we already see that $W \ge \tilde W$. We now show that $\tilde W \ge W$. Given any $\tilde\balpha \in\tilde{\mathbb{A}}$, we set:
\begin{equation}\label{eq:optimal_control_transform}
\alpha_t := \sum_{i=1}^m\mathbbm{1}(X_{t-}=e_i)\tilde\alpha_t^i.
\end{equation}
Clearly, $\balpha \in \mathbb{A}$. By standard results on ordinary differential equation, equation (\ref{eq:principal_pb_det_dyn}) admits a unique solution, say  $(\pi^{\tilde\balpha}(t))_{t\in [0,T]}$. Now let us consider the probability measure $\tilde{\mathbb{P}}$ defined by:

\begin{align}
\frac{d\tilde{\mathbb{P}}}{d\mathbb{P}} =&\;\; \tilde{\mathcal{E}}_T,\label{eq:principal_pb_det_dyn2_eq1}\\
d\tilde{\mathcal{E}}_t =&\;\; \tilde{\mathcal{E}}_{t-}X_{t-}^* (Q(t, \alpha_t, \pi^{(\tilde\alpha)}(t)) - Q^0)\psi_t^+ d\mathcal{M}_t,\;\;\tilde{\mathcal{E}}_0= 1.\label{eq:principal_pb_det_dyn2_eq2}
\end{align}
It is easy to see that under $\tilde{\mathbb{P}}$, the canonical process $\bX$ has the decomposition:

\[
X_t = X_0 + \int_0^t Q^*(s, \alpha_s, \pi^{\tilde\balpha}(s))X_{s-} ds + \tilde{\mathcal{M}}_t = X_0 + \int_0^t \tilde Q^*(s)X_{s-} ds + \tilde{\mathcal{M}}_t,
\]
where $\tilde\bcM=(\tilde{\mathcal{M}}_t)_{0\le t\le T}$ is a martingale and $\tilde Q(t)$ is the transition rate matrix with components $\tilde Q_{ij}(t) = q(t,i,j,\tilde\alpha_t^i,\pi^{\tilde\balpha}(t))$. This implies that $\bX$ is a continuous-time Markov chain under $\tilde{\mathbb{P}}$ and it is then straightforward to write the Kolmogorov equation satisfied by the marginal laws of $\bX$ under $\tilde{\mathbb{P}}$. Comparing this Kolmogorov equation with the ODE (\ref{eq:principal_pb_det_dyn}), we conclude by the uniqueness of the solutions that $\tilde{\mathbb{E}}[X_{t}]= \pi^{\tilde\balpha}(t)$, where $\tilde{\mathbb{E}}$ stands for the expectation under $\tilde{\mathbb{P}}$. Now in light of equations (\ref{eq:principal_pb_det_dyn2_eq1})-(\ref{eq:principal_pb_det_dyn2_eq2}), we conclude that $\tilde{\mathbb{P}}$ is the solution to the McKean-Vlasov SDE defined in (\ref{eq:principal_pb_tr1_dyn_eq1})-(\ref{eq:principal_pb_tr1_dyn_eq2}) corresponding to the control $\balpha$. It follows that $\tilde{\mathbb{P}} = \mathbb{P}^{\balpha}$ and $\pi^{\tilde\balpha}(t) = p^{\balpha}(t)$. We can now compute $I(\balpha)$:

\begin{align*}
I(\balpha)
=&\;\;\mathbb{E}^{\balpha}\left[\int_0^T \big(c_0(t, p^{\balpha}(t)) + \sum_{i=1}^m \mathbbm{1}(X_t = e_i)[c_1(t, e_i ,p^{\balpha}(t))  + \frac{\gamma_i}{2} \alpha^2_t] \big)dt + C_0(p^{\balpha}(T))\right]\\
=&\;\;\int_0^T \left[c_0(t, p^{\balpha}(t)) + \sum_{i=1}^m [c_1(t, e_i, p^{\balpha}(t)) +  \frac{\gamma_i}{2} (\tilde\alpha^i_t)^2]\mathbb{P}^{\balpha}[X_{t} = e_i] \right] dt + C_0(p^{\balpha}(T))\\
=&\;\; \int_0^T\bigg[c_0(t,\pi^{\tilde\balpha}(t)) + \sum_{i=1}^m [c_1(t,e_i,\pi^{\tilde\balpha}(t))  +  \frac{\gamma_i}{2}(\tilde\alpha_t^i)^2] \pi_i^{\tilde\balpha}(t) \bigg]dt + C_0(\pi^{\tilde\balpha}(T))\\
=&\;\;\tilde I(\tilde\balpha).
\end{align*}
From this, we deduce that $\tilde I(\tilde\balpha)  = I(\balpha) \ge W$ and finally $\tilde W \ge W$. Therefore we have $\tilde W = W$. Let $\tilde\balpha \in \tilde{\mathbb{A}}$ be the optimizer of $\tilde W$ and define $\balpha\in\mathbb{A}$ as in (\ref{eq:optimal_control_transform}). Then from the computations above, we see that $\tilde W = \tilde I(\tilde\balpha)  = I(\balpha)$. This immediately implies $I(\balpha) = W$ and $\balpha$ is the optimal control of $W$.
\end{proof}

\subsection{Construction of the Optimal Contract}

We continue to investigate the deterministic control problem $\tilde W$ as defined in equations (\ref{eq:w_tilde})-(\ref{eq:principal_pb_det_obj}). Once we have identified the optimal strategy of the agents at the equilibrium from the control problem $\tilde W$, we can then provide a semi-explicit construction for the optimal contract.
\begin{lemma}\label{lem:existence_optimal_control_deterministic}
An optimal control exists for the deterministic control problem $\tilde W$.
\end{lemma}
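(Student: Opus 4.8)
The plan is to apply the direct method of the calculus of variations to the deterministic control problem $\tilde W$. First I would set up a minimizing sequence: take $(\tilde\balpha^n)_{n\ge1}\subset\tilde{\mathbb{A}}$ with $\tilde I(\tilde\balpha^n)\to\tilde W$, and write $\pi^n:=\pi^{\tilde\balpha^n}$ for the associated solutions of the ODE system (\ref{eq:principal_pb_det_dyn}). Since $A=[\ubar\alpha,\bar\alpha]$ is compact, the sequence $(\tilde\balpha^n)$ is bounded in $L^2([0,T];\mathbb{R}^m)$ (indeed in $L^\infty$), so after passing to a subsequence it converges weakly in $L^2$ to some $\tilde\balpha^\infty$; because $A^m$ is convex and closed, Mazur's lemma gives $\tilde\balpha^\infty\in\tilde{\mathbb{A}}$. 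On the other side, the right-hand side of (\ref{eq:principal_pb_det_dyn}) is bounded uniformly in $n$ (using boundedness of $\tilde\balpha^n$, the $\lambda_{i,j}$, and continuity hence boundedness of $\bar q_{i,j}$ on the compact set $[0,T]\times\mathcal{S}$), so $(\pi^n)$ is equi-Lipschitz and bounded; by Arzel\`a--Ascoli, along a further subsequence $\pi^n\to\pi^\infty$ uniformly on $[0,T]$, with $\pi^\infty(0)=p^\circ$ and $\pi^\infty(t)\in\mathcal{S}$ for all $t$.

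Next I would verify that $\pi^\infty=\pi^{\tilde\balpha^\infty}$, i.e. that the limiting trajectory is the one driven by the limiting control. This is the step requiring care. Writing (\ref{eq:principal_pb_det_dyn}) in integrated form, the only term that is not obviously continuous under weak-$L^2$ convergence of $\tilde\balpha^n$ is the bilinear coupling $\pi_i^n(t)\,\tilde\alpha_t^{i,n}$ (and $\pi_j^n(t)\,\tilde\alpha_t^{j,n}$). Here one uses that $\pi^n\to\pi^\infty$ \emph{strongly} (uniformly) while $\tilde\balpha^n\rightharpoonup\tilde\balpha^\infty$ weakly in $L^2$: the product of a strongly convergent and a weakly convergent sequence converges weakly, so $\int_0^t\pi_i^n(s)\tilde\alpha_s^{i,n}\,ds\to\int_0^t\pi_i^\infty(s)\tilde\alpha_s^{i,\infty}\,ds$. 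The $\bar q_{i,j}(t,\pi^n(t))$ terms pass to the limit by uniform convergence and continuity of $\bar q_{i,j}$. Hence $\pi^\infty$ satisfies the integral form of (\ref{eq:principal_pb_det_dyn}) with control $\tilde\balpha^\infty$, and by uniqueness of solutions (standard ODE theory, as already invoked in the proof of Proposition \ref{prop:optimal_contracting}) we get $\pi^\infty=\pi^{\tilde\balpha^\infty}$.

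Finally I would establish lower semicontinuity of $\tilde I$ along the sequence. Decompose $\tilde I(\tilde\balpha^n)$ into three pieces: the term $\int_0^T c_0(t,\pi^n(t))\,dt$ and the terms $\int_0^T c_1(t,e_i,\pi^n(t))\pi_i^n(t)\,dt$ and $C_0(\pi^n(T))$ converge (not merely liminf) to the corresponding expressions in $\tilde\balpha^\infty$ by uniform convergence $\pi^n\to\pi^\infty$ together with continuity of $c_0$, $c_1(t,e_i,\cdot)$ and $C_0$; the remaining term $\sum_i\frac{\gamma_i}{2}\int_0^T (\tilde\alpha_t^{i,n})^2\pi_i^n(t)\,dt$ is where convexity enters. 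Since $\gamma_i>0$ and $\pi_i^n(t)\ge0$, the integrand is jointly convex in $(\tilde\alpha^i,\pi_i)$ for each fixed $t$ — or, more simply, write it as $\sum_i\frac{\gamma_i}{2}\int_0^T(\tilde\alpha_t^{i,n})^2\pi_i^\infty(t)\,dt + \sum_i\frac{\gamma_i}{2}\int_0^T(\tilde\alpha_t^{i,n})^2(\pi_i^n(t)-\pi_i^\infty(t))\,dt$; the second sum vanishes because $(\tilde\alpha^{i,n})^2$ is bounded in $L^1$ while $\pi_i^n-\pi_i^\infty\to0$ uniformly, and the first sum is a weighted $L^2$-norm squared with fixed nonnegative weights $\pi_i^\infty(t)$, hence weakly lower semicontinuous, so its liminf is at least $\sum_i\frac{\gamma_i}{2}\int_0^T(\tilde\alpha_t^{i,\infty})^2\pi_i^\infty(t)\,dt$. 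Combining, $\liminf_n\tilde I(\tilde\balpha^n)\ge\tilde I(\tilde\balpha^\infty)$, and since the left side equals $\tilde W$ we conclude $\tilde I(\tilde\balpha^\infty)=\tilde W$, so $\tilde\balpha^\infty$ is optimal. The main obstacle is the bilinear control-state coupling in both the dynamics and the cost; the resolution in each case is to exploit that the states converge strongly (uniformly) while only the controls converge weakly.
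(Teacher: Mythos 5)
Your proof is correct, but it takes a genuinely different route from the paper. The paper's argument is a two-line verification of the hypotheses of a standard existence theorem (Theorem I.11.1 in Fleming--Soner): the control set $A^m$ is convex and compact, the right-hand side of \eqref{eq:principal_pb_det_dyn} is $\mathcal{C}^1$ and affine in $\tilde\alpha$, and the running cost is $\mathcal{C}^1$ and convex in $\tilde\alpha$ (using $\gamma_i>0$ and $\pi_i\ge 0$), which is exactly the convexity of the extended velocity set that such Cesari/Filippov-type theorems require. You instead carry out the direct method by hand: minimizing sequence, weak $L^2$ compactness of the controls, Arzel\`a--Ascoli for the trajectories, identification of the limit trajectory via the strong-times-weak argument for the bilinear terms $\pi_i^n\tilde\alpha^{i,n}$, and weak lower semicontinuity of the quadratic cost with the frozen weight $\pi_i^\infty$. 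Your treatment of the two delicate points is sound: the identification step works because $\pi^n\to\pi^\infty$ uniformly while $\tilde\balpha^n$ converges only weakly, and the lower-semicontinuity step works because the weight $\pi_i^\infty$ is bounded and nonnegative (which relies on the simplex $\mathcal{S}$ being invariant under \eqref{eq:principal_pb_det_dyn}, true here since the off-diagonal rates $\lambda_{j,i}(\tilde\alpha^j-\ubar\alpha)+\bar q_{j,i}$ are nonnegative and the components sum to a constant --- worth stating explicitly). What each approach buys: the paper's citation is shorter and delegates exactly these two difficulties to the quoted theorem, whose hypotheses are what your steps verify concretely; your version is self-contained, makes visible where the linearity of the dynamics in $\alpha$ and the convexity of the cost in $\alpha$ are actually used, and would generalize more transparently to settings not literally covered by the cited theorem.
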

\begin{proof}
It is straightforward to verify that: (1) The space of controls is convex and compact. (2) The right-hand side of the ODE (\ref{eq:principal_pb_det_dyn}) is $\mathcal{C}^1$ and is linear in $\tilde \alpha$. (3) The running cost is $\mathcal{C}^1$ and convex in $\alpha$ for all $(t,\pi)\in [0,T]\times\mathcal{S}$ and the terminal cost is $\mathcal{C}^1$. This allows us to apply Theorem I.11.1 in \cite{fleming2006controlled} and obtain the existence of the optimal control.
\end{proof}

Having verified that $\tilde W$ admits an optimal solution, we now apply the necessary part of the Pontryagin maximum principle (see Theorem I.6.3 \cite{fleming2006controlled}), and derive a system of ODEs that characterizes the optimal control and the corresponding flow of probability measures. The Hamiltonian $\tilde H$ of the control problem $\tilde W$ is a mapping from $[0,T]\times\mathcal{S}\times\mathbb{R}^m\times A^m$ to $\mathbb{R}$ defined by:
\begin{align*}
\tilde H(t,\pi,y,\alpha) :=&\sum_{i=1}^m \sum_{j,j\neq i} y_i\left[\pi_j \left(\lambda_{j,i}(\alpha_j - \ubar\alpha) + \bar q_{j,i}(t,\pi)\right) - \pi_i \left(\lambda_{i,j}(\alpha_i - \ubar\alpha) + \bar q_{i,j}(t,\pi)\right)\right]\\
&+ c_0(t,\pi) + \sum_{i=1}^m \pi_i\left[c_1(t,e_i,\pi)+\frac{\gamma_i}{2}(\alpha_i)^2\right].
\end{align*}
It is straightforward to obtain:
\[
\partial_{\alpha_i}\tilde H(t,\pi,y,\alpha) = \pi_i \left(\sum_{k\neq i}(y_k - y_i)\lambda_{i,k} + \gamma_i\alpha_i\right),
\]
and the minimizer of $\alpha\rightarrow \tilde H(t,\pi,y,\alpha)$ is
\[
\hat a_i(y) = b(-\sum_{k\neq i}\lambda_{i,k}(y_k - y_i) / \gamma_i),
\]
for the function $b$ we already defined as $b(z) := \min\{\max\{z,\ubar\alpha\},\bar\alpha\}$. By the necessary condition of Pontryagin's maximum principle, if $(\pi(t))_{0\le t\le T}$ is the flow of measures associated with the optimal control, then $(\pi(t), y(t))_{t\in [0,T]}$ is the solution to the following system of forward-backward ODEs:

\begin{align}
\frac{d\pi(t)}{dt} =&\;\; \partial_y\tilde H(t,\pi(t),y(t),\hat a(y(t))),\;\;\pi(0) =p^{\circ},\label{eq:pontryagin_lq_1}\\
\frac{dy(t)}{dt}=&\;\;-\partial_\pi\tilde H(t,\pi(t),y(t),\hat a(y(t))),\;\;y(T) = \nabla C_0(\pi(T)).\label{eq:pontryagin_lq_2}
\end{align}
Summarizing the above discussion, as well as the arguments which allow us to reduce the optimal contracting problem to the deterministic control problem we just solved, we provide a semi-explicit construction of the optimal contract and the optimal strategy of the agents at the equilibrium.

\begin{theorem}
Let $(\hat\pi, \hat y)$ be the solution to the system (\ref{eq:pontryagin_lq_1})-(\ref{eq:pontryagin_lq_2}), and let us define the processes $\hat\balpha\in\mathbb{A}$ and $\hat \bZ \in \mathcal{H}^2_{X}$ by:

\begin{align}
\hat\alpha_t :=&\;\; \sum_{i=1}^m \mathbbm{1}(X_{t-} = e_i)\hat a_i(\hat y(t)),\\
\hat Z_t :=&\;\; [Z^1_t, \dots, Z^m_t]^*,\quad \hat Z^i_t := \sum_{j,j\neq i}\mathbbm{1}(X_{t-} = e_j) \frac{\gamma_j\hat a_j(\hat y(t))}{(m-1)\sum_{k\neq j}\lambda_{j,k}}.
\end{align}
Now let $y_0 \in \mathbb{R}^m$ be such that $p^{\circ}_0\cdot y_0 \le \kappa$ and let $\hat r$ be the minimizer of the mapping $r\rightarrow r- u(r)$. We then define the random variable $\hat \xi$ almost surely by the following Stieltjes integral:

\begin{equation}
\label{eq:optimal_contract_explicit}
\hat \xi := - X_0^* y_0 + \int_0^T [c(t, X_{t-}, \hat\alpha_t, \hat \pi(t))-\hat r + X^*_{t-} Q(t, \hat\alpha_t, \hat \pi(t))\hat Z_t ] dt - \int_0^T \hat Z_t^* dX_{t-}.
\end{equation}
Then $(\hat \br, \hat \xi)$ is an optimal contract. Moreover, under the optimal contract, every agent adopts the Markovian strategy  where they pick the control $\hat a_i(\hat y(t))$ when in the state $e_i$ at time $t$, and the flow of distributions of agents' states is given by $(\hat\pi(t))_{0\le t\le T}$.
\end{theorem}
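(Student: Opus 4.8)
The plan is to run, in reverse, the reductions of Sections~\ref{sec:mkv_ctrl}--\ref{sec:lq_model}: from the Pontryagin solution $(\hat\pi,\hat y)$ we recover the agents' optimal feedback $\hat\balpha$, then a driver $(\hat\bZ,\hat\br)$ for the forward McKean--Vlasov SDE \reff{eq:forward_y_mkv1}--\reff{eq:forward_y_mkv4}, read the contract off as $\hat\xi=-Y_T$, and verify that the Nash equilibrium it induces is feasible and realizes $V(\kappa)$. First, by Lemma~\ref{lem:existence_optimal_control_deterministic} an optimal control of $\tilde W$ exists; being optimal it satisfies the necessary Pontryagin conditions \reff{eq:pontryagin_lq_1}--\reff{eq:pontryagin_lq_2}, hence is of the feedback form $\tilde\alpha_t^i=\hat a_i(\hat y(t))$ with $(\hat\pi,\hat y)$ the corresponding solution of that forward--backward system, and $\tilde I(\tilde\balpha)=\tilde W$. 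Proposition~\ref{prop:optimal_contracting} then yields that $\hat\balpha$, with $\hat\alpha_t=\sum_i\mathbbm{1}(X_{t-}=e_i)\hat a_i(\hat y(t))$, is optimal for $W$, that $I(\hat\balpha)=W=\tilde W$, and that under $\mathbb{P}^{\hat\balpha}$ the process $\bX$ is a continuous-time Markov chain with $\mathbb{E}^{\hat\balpha}[X_t]=\hat\pi(t)$. A plug-in computation with the explicit minimizer \reff{eq:hamiltonian_optimizer_lq} shows that the $\hat\bZ$ of the statement is exactly the process attached to $\hat\balpha$ by \reff{eq:conversion_z2alpha}, so $\hat\bZ\in\mathcal{H}_X^2$ and $\hat a(t,X_{t-},\hat Z_t)=\hat\alpha_t$.

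Next I would put $Y_0:=X_0^*y_0$ (so $\mathbb{E}^{\mathbb{P}}[Y_0]=(p^{\circ})^*y_0$; the inequality $\le\kappa$ ensures feasibility below, and optimality will in fact force the equality $(p^{\circ})^*y_0=\kappa$) and $\hat\br\equiv\hat r$. Because $\hat a$ is measure-independent in the linear--quadratic model, the $\balpha$-component of the forward McKean--Vlasov SDE \reff{eq:forward_y_mkv1}--\reff{eq:forward_y_mkv4} driven by $(\hat\bZ,\hat\br,Y_0)$ is forced to be $\hat\balpha$, the remaining fixed point in the measure flow is precisely the Kolmogorov ODE \reff{eq:principal_pb_det_dyn} with $\tilde\alpha_t^i=\hat a_i(\hat y(t))$ --- whose unique solution is $\hat\pi$ --- and the associated probability is $\mathbb{P}^{\hat\balpha}$; this is the computation already made in the proof of Proposition~\ref{prop:optimal_contracting}. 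Writing out the terminal value $Y_T$ from \reff{eq:forward_y_mkv1}, replacing $d\mathcal{M}_t$ by $dX_t-Q^0X_{t-}\,dt$ and using the symmetry $Q^{0*}=Q^0$ to cancel the terms in $Q^0\hat Z_t$, one identifies $U^{-1}(-Y_T)=-Y_T$ with the random variable $\hat\xi$ of \reff{eq:optimal_contract_explicit}; boundedness of $\hat\balpha$ and $\hat\bZ$, together with the integrability of the number of jumps of $\bX$, gives $\hat\xi\in L^2(\mathbb{P})$.

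Finally, since $U(\hat\xi)=\hat\xi=-Y_T$, reading \reff{eq:forward_y_mkv1} backwards shows that $(\bY,\hat\bZ,\hat\balpha,\hat\pi,\mathbb{P}^{\hat\balpha})$ solves the McKean--Vlasov BSDE \reff{eq:bsde_mfg_1}--\reff{eq:bsde_mfg_4} for the contract $(\hat\br,\hat\xi)$; Theorem~\ref{thm:proba_characterization_ne} then gives $(\hat\balpha,\hat\pi)\in\mathcal{N}(\hat\br,\hat\xi)$, so $(\hat\br,\hat\xi)\in\mathcal{C}$, and Lemma~\ref{lem:bsde_characterization}(i) gives $J^{\hat\br,\hat\xi}(\hat\balpha,\hat\pi)=\mathbb{E}^{\mathbb{P}}[Y_0]=\kappa$, so this equilibrium meets the reservation constraint. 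A direct evaluation of $J_0^{\hat\balpha,\hat\pi}(\hat\br,\hat\xi)$, substituting $\hat\xi=-Y_T$ and invoking Lemma~\ref{lem:bsde_characterization}(i) once more, collapses it to $-\kappa+T(\hat r-u(\hat r))+I(\hat\balpha)=-\kappa+T(\hat r-u(\hat r))+W$, which equals $V(\kappa)$ by the reduction preceding Proposition~\ref{prop:optimal_contracting}. As every admissible contract paired with a constrained equilibrium has cost at least $V(\kappa)$, the contract $(\hat\br,\hat\xi)$ is optimal, and the Markovian description of the agents' play and of the state flow is the one supplied by the forward SDE above. The hard part is the passage back from the deterministic control problem $\tilde W$ to an honest $\mathcal{F}_T$-measurable, square-integrable contract whose Nash equilibrium is \emph{guaranteed} to realize the value; this is exactly what Theorem~\ref{thm:proba_characterization_ne} and the forward reformulation of the agents' BSDE are built to provide, and everything else --- notably the symmetry argument matching $-Y_T$ with the Stieltjes expression \reff{eq:optimal_contract_explicit} --- is routine bookkeeping.
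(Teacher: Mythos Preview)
Your proof outline is correct and follows precisely the route the paper intends: the paper does not give a standalone proof of this theorem but presents it as a summary of the chain of reductions in Sections~\ref{sec:mkv_ctrl}--\ref{sec:lq_model}, and you have faithfully reversed that chain step by step (Pontryagin optimizer $\Rightarrow$ optimal $\tilde\balpha$ for $\tilde W$ $\Rightarrow$ optimal $\hat\balpha$ for $W$ via Proposition~\ref{prop:optimal_contracting} $\Rightarrow$ $\hat\bZ$ via \reff{eq:conversion_z2alpha} $\Rightarrow$ forward SDE \reff{eq:forward_y_mkv1}--\reff{eq:forward_y_mkv4} $\Rightarrow$ $\hat\xi=-Y_T$ $\Rightarrow$ Nash equilibrium via Theorem~\ref{thm:proba_characterization_ne} $\Rightarrow$ cost equals $V(\kappa)$). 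Your observation that optimality actually forces $(p^{\circ})^*y_0=\kappa$ rather than the inequality stated is a valid refinement the paper glosses over.
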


\section{\textbf{Application to a Model of Epidemic Containment}}
\label{sec:epidemic}

To illustrate the inner workings of the model completely solved above, we consider an example of epidemic containment. We imagine a disease control authority that aims at containing the spread of a virus over a time period $[0,T]$ within its own jurisdiction, which consists of two cities $A$ and $B$. The state of each individual is encoded by whether it is infected (denoted by $I$) or healthy (denoted by $H$), and by its location (denoted by $A$ or $B$). Therefore the state space is $E = \{AI, AH, BI, BH\}$, and we use $\pi_{AI}, \pi_{AH}, \pi_{BI}, \pi_{BH}$ to denote the proportion of individuals in each of these $4$ states. We assume that each individual's state evolves as a continuous-time Markov chain, and our modeling of the transition rate accounts for the following set of mechanisms regarding the contraction of the virus and the possible migration of individuals between the two cities:

\vspace{2mm}
\noindent (1) Within each city, the rate of contracting the virus depends on the proportion of infected individuals in the city, and accordingly we assume that the transition rate from state $AH$ to state $AI$ is $\theta_A^-(\frac{\pi_{AI}}{\pi_{AI} + \pi_{AH}})$, while the transition rate from state $BH$ to state $BI$ is $\theta_B^-(\frac{\pi_{BI}}{\pi_{BI} + \pi_{BH}})$. Here $\theta_A^-$ and $\theta_B^-$ are two increasing, positive and differentiable mappings from $[0,1]$ to $\mathbb{R}^+$. They capture the quality of health care in city $A$ and $B$, respectively.

\vspace{2mm}
\noindent (2) Likewise, the rate of recovery in each city is a function of the proportion of healthy individuals in the city. We thus assume that the transition rate from state $AI$ to state $AH$ is $\theta_A^+(\frac{\pi_{AH}}{\pi_{AI} + \pi_{AH}})$ and the transition rate from state $BH$ to state $BI$ is $\theta_B^+(\frac{\pi_{BH}}{\pi_{BI} + \pi_{BH}})$. Similarly, $\theta_A^+$ and $\theta_B^+$ are two increasing, positive and differentiable mappings from $[0,1]$ to $\mathbb{R}^+$, characterizing the quality of health care in city $A$ and $B$ respectively.

\vspace{2mm}
\noindent (3) Each individual can choose a level of effort $\alpha$ to move to the other city. We assume that the efficacy of that effort depends on whether the individual is healthy or infected. Accordingly, we set $\nu_I\alpha$ as the transition rates between the states $AI$ and $BI$, and we set $\nu_H\alpha$ as the transition rates between the states $AH$ and $BH$. 

\vspace{2mm}
\noindent (4) To model the inflow of infection, we assume that each individual's status of infection does not change when it moves between cities. This means that we set the transition rates between the state $AI$ and $BH$, and the transition rates between the state $AH$ and $BI$ to $0$.

\vskip 6pt
To summarize, we define the transition rate matrix $Q$ to be:
\begin{equation}\label{eq:epicontain_q_matrix}
Q(t,\alpha,\pi):=\left[
\begin{array}{cccc}
\cdots & \theta_A^+(\frac{\pi_{AH}}{\pi_{AI} + \pi_{AH}}) & \nu_I \alpha & 0\\
\theta_A^-(\frac{\pi_{AI}}{\pi_{AI} + \pi_{AH}}) & \cdots & 0 & \nu_H\alpha\\
\nu_I\alpha & 0 & \cdots & \theta_B^+(\frac{\pi_{BH}}{\pi_{BI} + \pi_{BH}}) \\
0 & \nu_H\alpha & \theta_B^-(\frac{\pi_{BI}}{\pi_{BI} + \pi_{BH}}) & \cdots
\end{array}
\right].
\end{equation}
Here the transition rate matrix is written assuming the order $AI, AH, BI, BH$ for the states. For simplicity of the notation, we also omit the diagonal elements. Indeed, since $Q$ is a transition rate matrix, each diagonal element equals the negative of the sum of the off-diagonal elements in the same row.

\vspace{2mm}
We resume the description of our model in terms of the cost functions for the individuals and the disease control authority. We assume that individuals living in the same city incur the same cost, which depends on the proportion of infected individuals in that city. On the other hand, the cost for exerting the effort to move depends on the status of infection. Using the notations of the cost function for the linear quadratic model as in (\ref{eq:agent_cost_lq}), we define:
\begin{align}
c_1(t, AI, \pi) =&\;\; c_1(t, AH, \pi) := \phi_A\left(\frac{\pi_{AI}}{\pi_{AI} + \pi_{AH}}\right),\label{eq:epicontain_cost_A}\\
c_1(t, BI, \pi) =&\;\; c_1(t, BH, \pi) := \phi_B\left(\frac{\pi_{BI}}{\pi_{BI} + \pi_{BH}}\right),\label{eq:epicontain_cost_B}\\
\gamma_{AI} =&\;\; \gamma_{BI} := \gamma_I,\;\;\gamma_{AH}=\gamma_{BH} := \gamma_H,\label{eq:epicontain_cost_move}
\end{align}
where $\phi_A$ and $\phi_B$ are two increasing mappings on $\mathbb{R}$. For the authority, we propose to use the following running cost and terminal cost:
\begin{align}
c_0(t,\pi) =&\;\; \exp(\sigma_A \pi_{AI} + \sigma_B \pi_{BI}),\label{eq:epicontain_cost_authority1}\\
C_0(\pi) =&\;\; \sigma_P\cdot(\pi_{AI} + \pi_{AH} - \pi_A^0)^2,\label{eq:epicontain_cost_authority2}
\end{align}
where $\pi_A^0$ is the population of city $A$ at time $0$. Intuitively speaking, the above cost function tries to encapsulate a form of trade-off between the control of the epidemic  and  population planning. On the one hand, the authority attempts to minimize the infection rate of both cities. On the other hand, as we shall see shortly in the numerical simulation, individuals tends to move away from the city with higher infection rate and poorer health care, which might result in the overpopulation of the other city. Therefore, the authority also wishes to maintain the population of both cities at a steady level. The coefficients $\sigma_A$, $\sigma_B$ and $\sigma_P$ reflects the relative importance the authority attributes to each of these objectives.

\vspace{2mm}
It can be easily verified that the setup outlined above satisfies the assumptions of the linear quadratic model studied in Section \ref{sec:lq_model}. Although the forward-backward system of ODEs (\ref{eq:pontryagin_lq_1}) - (\ref{eq:pontryagin_lq_2}) which characterizes the optimal contract can be readily derived, for the sake of completeness, we shall give the details of the equations to be solved in the appendix.

\vspace{2mm}
For the purpose of illustration, we give the results of numerical simulations for a scenario in which city $A$ has a higher quality of health care than city $B$. Accordingly we set:
\begin{equation}\label{eq:infection_rate_simulation}
\theta^+_A(q) := 0.4 q,\;\;\theta^-_A(q) := 0.1 q,\;\;\theta^+_B(q) := 0.2 q,\;\;\theta^-_B(q) := 0.2 q.
\end{equation}
This means that it is easier to recover and harder to get infected in city $A$ than in city $B$. In addition, we assume that individuals suffer a higher cost associated with the epidemic in city $B$ than in city $A$, and we set the cost function of individuals in each city to be:
\begin{equation}\label{eq:individual_cost_simulation}
\phi_A(q) := q,\;\;\phi_B(q):=2 q.
\end{equation}
We set the maximal possible effort of individuals to $\bar\alpha := 10$ and the coefficients for quadratic cost of efforts to $\gamma_I := 2.0$ and $\gamma_H = 0.5$. Finally, the parameters for the cost of the authority in equations (\ref{eq:epicontain_cost_authority1}) and (\ref{eq:epicontain_cost_authority2}) are set to:
\begin{equation}\label{eq:authority_cost_parameters}
\sigma_A = \sigma_B := 1,\;\;\sigma_P := 0.
\end{equation}
Notice that the authority gives the same importance to the infection rates of city $A$ and city $B$ while disregarding the problem of overpopulation.

\vspace{2mm}
In the following, we shall visualize the effect of the disease control authority's intervention by comparing the equilibrium computed from the principal agent problem with the equilibrium from the mean field game of \emph{anarchy}. By the term \emph{anarchy}, we refer to the situation where the states of individuals are still governed by the same transition rate, but the individuals do not receive any rewards or penalties from the authority. More specifically, the expected total cost of each individual is given by:
\[
\mathbb{E}^{\mathbb{Q}^{(\balpha,\pi)}}\left[\int_0^T c(t, X_t, \alpha_t, \pi(t)) dt\right],
\]
with the instantaneous cost $c$ is given by:
\begin{align*}
c(t, x, \alpha, \pi) := &\;\;\mathbbm{1}(x\in\{AI, AH\})\cdot\phi_A\left(\frac{\pi_{AI}}{\pi_{AI} + \pi_{AH}}\right)\\
&\;\;+ \mathbbm{1}(x\in\{BI, BH\})\cdot\phi_B\left(\frac{\pi_{BI}}{\pi_{BI} + \pi_{BH}}\right)\\
&\;\;+ (\mathbbm{1}(x\in\{AI, BI\})\gamma_I + \mathbbm{1}(x\in\{AH, BH\})\gamma_H)\cdot\frac{\alpha^2}{2}.
\end{align*}
Following the analytical approach of finite-state mean field games introduced in \cite{gomes2013discrete}, it is straightforward to derive the system of forward-backward ODEs characterizing the Nash equilibrium (See the system of ODEs (12)-(13) in \cite{gomes2013discrete}). For the sake of completeness, we display the system of ODEs in the appendix.

\vspace{2mm}
The effect of the authority's intervention is conspicuous in diminishing the infection rate among the entire population, as is shown in the upper panel of Figure \ref{fig:infection_rate_plot}. However, when we visualize the respective infection rate of each city in the lower panels of Figure \ref{fig:infection_rate_plot}, we do observe a surge of infection in city $A$ as a result of the authority's intervention, although the epidemic eventually dies down. This is due to the inflow of infected individuals from city $B$ in the early stage of the epidemic. Indeed, since city $A$ provides better health care and maintains a lower rate of infection compared to city $B$, an infected individual has a better chance of recovery if it moves to city $A$. The cost of moving prevents individuals from seeking better health care in the scenario of anarchy, whereas individuals seem to receive subsidy to move when the authority tries to intervene. This outcome of the authority's intervention is further corroborated when we visualize an individual's optimal strategy in Figure \ref{fig:optimal_strat_plot}. We observe that individuals have a greater propensity to move when the authority provides incentives.
\begin{figure}[h]
\centering
\includegraphics[scale=0.5, trim = 0mm 0mm 0mm 0mm, clip=true]{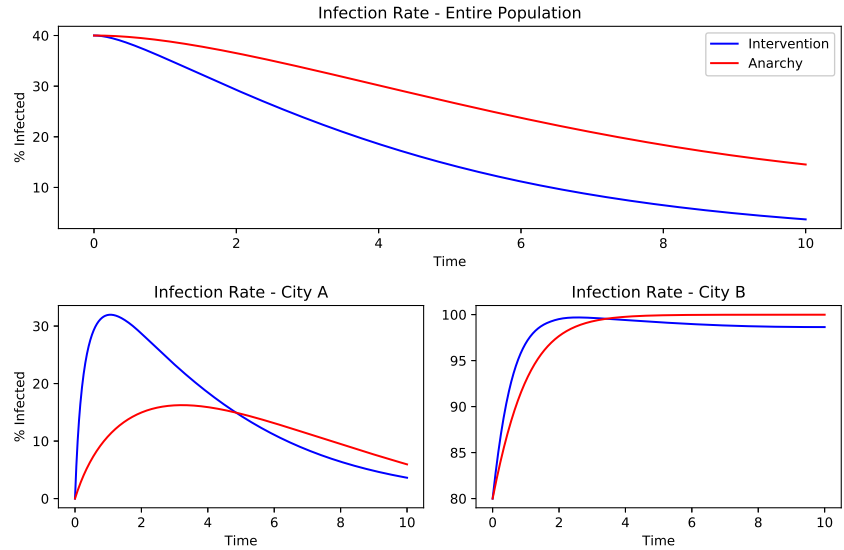}
\caption{Evolution of infection rate with and without authority's intervention.}
\label{fig:infection_rate_plot}
\end{figure}

\begin{figure}[h]
\centering
\includegraphics[scale=0.5, trim = 0mm 0mm 0mm 0mm, clip=true]{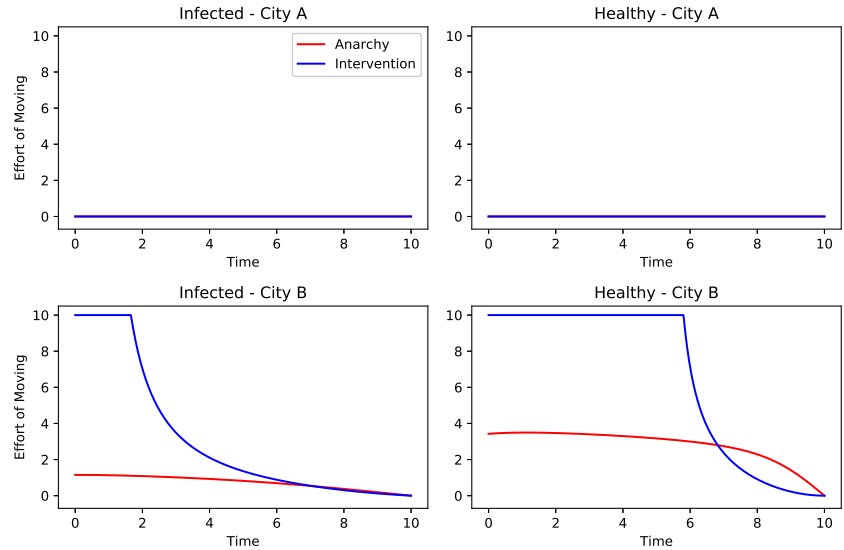}
\caption{Optimal effort of moving for an individual in different states.}
\label{fig:optimal_strat_plot}
\end{figure}
Recall that in the above numerical computation, by setting $\sigma_P = 0$ in its terminal cost function (\ref{eq:epicontain_cost_authority2}), the authority does not attempt to maintain the balance of population between the two cities. We now investigate how the behavior of the individuals changes when the authority seeks to prevent the occurrence of overpopulation. To this end, we rerun the computation with $\sigma_P = 1.5$ and all the other parameters unchanged. In Figure \ref{fig:population_planning}, we compare the evolution of the population in city $A$ as well as the total infection rate with and without the population planning. When the authority does not try to control the flow of the population, the entire population ends up in city $A$. However, when a terminal cost related to population planning is introduced, we see a more balanced population distribution while the infection rate is still well managed. This can be explained by Figure \ref{fig:optimal_strat_plot_planning}, from which we have a more detailed perspective on the change of individual behavior when the authority implements the population planning. We see that healthy individuals in city $A$ are now encouraged to move the city $B$, in order to compensate the exodus caused by the epidemic in city $B$. On the other hand, healthy individuals in city $B$ are now incentivized to stay in place.
\begin{figure}[H]
\centering
\includegraphics[scale=0.5, trim = 0mm 0mm 0mm 0mm, clip=true]{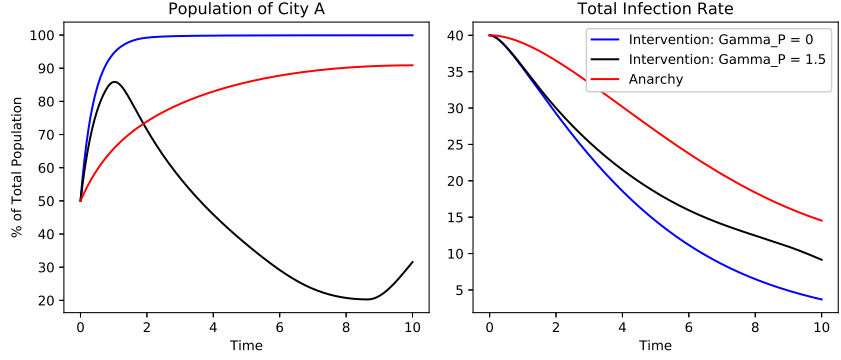}
\caption{Evolution of population in city $A$ and the total infection rate. The blue curve corresponds to intervention without population planning ($\sigma_P = 0$), the black curve corresponds to intervention with population planning ($\sigma_P = 1.5$) and the red curve corresponds to the absence of intervention.}
\label{fig:population_planning}
\end{figure}

\begin{figure}[H]
\centering
\includegraphics[scale=0.5, trim = 0mm 0mm 0mm 0mm, clip=true]{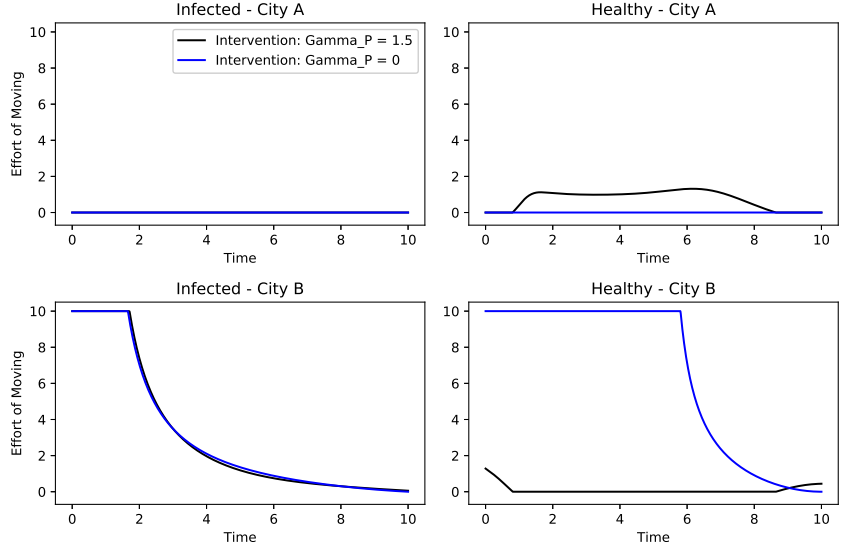}
\caption{Individual's optimal effort of moving with and without population planning.}
\label{fig:optimal_strat_plot_planning}
\end{figure}

\section{\textbf{Appendix: System of ODEs for Epidemic Containment}}

\subsection{Authority's optimal planning}

Using the transition rate (\ref{eq:epicontain_q_matrix}) and the cost functions (\ref{eq:epicontain_cost_A})-(\ref{eq:epicontain_cost_authority2}), the system of ODEs (\ref{eq:pontryagin_lq_1})-(\ref{eq:pontryagin_lq_2}) becomes:
\begin{align*}
\dot{y_0}(t) =&\;\; -\frac{[y_0(t) - y_1(t)]\pi_1(t)}{(\pi_0(t) + \pi_1(t))^2}\cdot\left[\pi_1(t)(\theta_A^{-})'\left(\frac{\pi_0(t)}{\pi_0(t) + \pi_1(t)}\right) + \pi_0(t)(\theta_A^{+})'\left(\frac{\pi_1(t)}{\pi_0(t) + \pi_1(t)}\right)\right]\\
&\;\; +[y_0(t) - y_1(t)]\cdot \theta_A^{+}\left(\frac{\pi_1(t)}{\pi_0(t) + \pi_1(t)}\right) - y_2(t)\nu_I\hat a_0(y(t)) - \partial_{\pi_0}c_0(\pi(t))\\
&\;\; - \frac{1}{2}\gamma_I((\hat a_0(y(t)))^2 - \phi_A\left(\frac{\pi_0(t)}{\pi_0(t) + \pi_1(t)}\right) - \phi'_A\left(\frac{\pi_0(t)}{\pi_0(t) + \pi_1(t)}\right) \cdot \frac{\pi_1(t)}{\pi_0(t) + \pi_1(t)},\\\\
\dot{y_1}(t) =&\;\; -\frac{[y_1(t) - y_0(t)]\pi_0(t)}{(\pi_0(t) + \pi_1(t))^2}\cdot\left[\pi_1(t)(\theta_A^{-})'\left(\frac{\pi_0(t)}{\pi_0(t) + \pi_1(t)}\right) + \pi_0(t)(\theta_A^{+})'\left(\frac{\pi_1(t)}{\pi_0(t) + \pi_1(t)}\right)\right]\\
&\;\; +[y_1(t) - y_0(t)]\cdot \theta_A^{+}\left(\frac{\pi_0(t)}{\pi_0(t) + \pi_1(t)}\right) - y_3(t)\nu_H\hat a_1(y(t)) - \partial_{\pi_1}c_0(\pi(t))\\
&\;\; - \frac{1}{2}\gamma_H((\hat a_1(y(t)))^2 - \phi_A\left(\frac{\pi_0(t)}{\pi_0(t) + \pi_1(t)}\right) + \phi'_A\left(\frac{\pi_0(t)}{\pi_0(t) + \pi_1(t)}\right) \cdot \frac{\pi_0(t)}{\pi_0(t) + \pi_1(t)},\\\\
\dot{y_2}(t) =&\;\; -\frac{[y_2(t) - y_3(t)]\pi_3(t)}{(\pi_2(t) + \pi_3(t))^2}\cdot\left[\pi_3(t)(\theta_B^{-})'\left(\frac{\pi_2(t)}{\pi_2(t) + \pi_3(t)}\right) + \pi_2(t)(\theta_B^{+})'\left(\frac{\pi_3(t)}{\pi_2(t) + \pi_3(t)}\right)\right]\\
&\;\; +[y_2(t) - y_3(t)]\cdot \theta_B^{+}\left(\frac{\pi_3(t)}{\pi_2(t) + \pi_3(t)}\right) - y_0(t)\nu_I\hat a_2(y(t)) - \partial_{\pi_2}c_0(\pi(t))\\
&\;\; - \frac{1}{2}\gamma_I((\hat a_2(y(t)))^2 - \phi_B\left(\frac{\pi_2(t)}{\pi_2(t) + \pi_3(t)}\right) - \phi'_B\left(\frac{\pi_2(t)}{\pi_2(t) + \pi_3(t)}\right) \cdot \frac{\pi_3(t)}{\pi_2(t) + \pi_3(t)},\\\\
\dot{y_3}(t) =&\;\; -\frac{[y_3(t) - y_2(t)]\pi_2(t)}{(\pi_2(t) + \pi_3(t))^2}\cdot\left[\pi_3(t)(\theta_B^{-})'\left(\frac{\pi_2(t)}{\pi_2(t) + \pi_3(t)}\right) + \pi_2(t)(\theta_B^{+})'\left(\frac{\pi_3(t)}{\pi_2(t) + \pi_3(t)}\right)\right]\\
&\;\; +[y_3(t) - y_2(t)]\cdot \theta_B^{+}\left(\frac{\pi_2(t)}{\pi_2(t) + \pi_3(t)}\right) - y_1(t)\nu_H\hat a_3(y(t)) - \partial_{\pi_3}c_0(\pi(t))\\
&\;\; - \frac{1}{2}\gamma_H((\hat a_3(y(t)))^2 - \phi_B\left(\frac{\pi_2(t)}{\pi_2(t) + \pi_3(t)}\right) + \phi'_B\left(\frac{\pi_2(t)}{\pi_2(t) + \pi_3(t)}\right) \cdot \frac{\pi_2(t)}{\pi_2(t) + \pi_3(t)},\\\\
\dot{\pi_0}(t)=&\;\;\pi_1(t)\theta_A^-\left(\frac{\pi_0(t)}{\pi_0(t) + \pi_1(t)}\right) - \pi_0(t)\left[\theta_A^+\left(\frac{\pi_1(t)}{\pi_0(t) + \pi_1(t)}\right) + \nu_I\hat a_0(y(t))\right]\\
&\;\; + \pi_2(t)\nu_I\hat a_2(y(t)),\\\\
\dot{\pi_1}(t)=&\;\;\pi_0(t)\theta_A^+\left(\frac{\pi_1(t)}{\pi_0(t) + \pi_1(t)}\right) - \pi_1(t)\left[\theta_A^-\left(\frac{\pi_0(t)}{\pi_0(t) + \pi_1(t)}\right) + \nu_H\hat a_1(y(t))\right]\\
&\;\;+ \pi_3(t)\nu_H\hat a_3(y(t)),\\\\
\dot{\pi_2}(t)=&\;\;\pi_3(t)\theta_B^-\left(\frac{\pi_2(t)}{\pi_2(t) + \pi_3(t)}\right) - \pi_2(t)\left[\theta_B^+\left(\frac{\pi_3(t)}{\pi_2(t) + \pi_3(t)}\right) + \nu_I\hat a_2(y(t))\right]\\
&\;\;+ \pi_0(t)\nu_I\hat a_0(y(t)),\\\\
\dot{\pi_3}(t)=&\;\;\pi_2(t)\theta_B^+\left(\frac{\pi_3(t)}{\pi_2(t) + \pi_3(t)}\right) - \pi_3(t)\left[\theta_B^-\left(\frac{\pi_2(t)}{\pi_2(t) + \pi_3(t)}\right) + \nu_H\hat a_3(y(t))\right]\\
&\;\;+ \pi_1(t)\nu_H\hat a_1(y(t)),\\
\end{align*}
where the optimal control is defined by:
\begin{align*}
\hat a_0(y):=&\;b\left(\frac{\nu_I(y_0 - y_2)}{\gamma_I}\right),\;\;\hat a_1(y):=b\left(\frac{\nu_H(y_1 - y_3)}{\gamma_H}\right),\\
\hat a_2(y):=&\;b\left(\frac{\nu_I(y_2 - y_0)}{\gamma_I}\right),\;\;\hat a_3(y):=b\left(\frac{\nu_H(y_3 - y_1)}{\gamma_H}\right),
\end{align*}
and the terminal conditions are $\pi(0) = \mathbf{p}^{\circ}$ and $y(T) = \nabla C_0(\pi(T))$.

\subsection{Mean field equilibrium in the absence of the authority}

The system of ODEs characterizing the mean field game equilibrium consists of the Hamilton-Jacobi equation and the Kolmogorov equation.
\begin{align*}
\dot{v_0}(t) =&\;\; [v_1(t) - v_0(t)] \theta_A^+\left(\frac{\pi_1(t)}{\pi_0(t) + \pi_1(t)}\right) + [v_2(t) - v_0(t)]\nu_I\hat a_0(v(t))\\
&\;\; + \frac{1}{2}\gamma_I(\hat a_0(v(t)))^2 + \phi_A\left(\frac{\pi_0(t)}{\pi_0(t) + \pi_1(t)}\right),\\\\
\dot{v_1}(t) =&\;\; [v_0(t) - v_1(t)] \theta_A^-\left(\frac{\pi_0(t)}{\pi_0(t) + \pi_1(t)}\right) + [v_3(t) - v_1(t)]\nu_H\hat a_1(v(t))\\
&\;\; + \frac{1}{2}\gamma_H(\hat a_1(v(t)))^2+ \phi_A\left(\frac{\pi_0(t)}{\pi_0(t) + \pi_1(t)}\right),\\\\
\dot{v_2}(t) =&\;\; [v_3(t) - v_2(t)] \theta_B^+\left(\frac{\pi_3(t)}{\pi_2(t) + \pi_3(t)}\right) + [v_0(t) - v_2(t)]\nu_I\hat a_2(v(t))\\
&\;\; + \frac{1}{2}\gamma_I(\hat a_2(v(t)))^2 + \phi_B\left(\frac{\pi_2(t)}{\pi_2(t) + \pi_3(t)}\right),\\\\
\dot{v_3}(t) =&\;\; [v_2(t) - v_3(t)] \theta_B^-\left(\frac{\pi_2(t)}{\pi_2(t) + \pi_3(t)}\right) + [v_1(t) - v_3(t)]\nu_H\hat a_3(v(t))\\
&\;\; + \frac{1}{2}\gamma_H(\hat a_3(v(t)))^2+ \phi_B\left(\frac{\pi_2(t)}{\pi_2(t) + \pi_3(t)}\right),\\\\
\dot{\pi_0}(t)=&\;\;\pi_1(t)\theta_A^-\left(\frac{\pi_0(t)}{\pi_0(t) + \pi_1(t)}\right) - \pi_0(t)\left[\theta_A^+\left(\frac{\pi_1(t)}{\pi_0(t) + \pi_1(t)}\right) + \nu_I\hat a_0(v(t))\right]\\
&\;\;+ \pi_2(t)\nu_I\hat a_2(v(t)),\\\\
\dot{\pi_1}(t)=&\;\;\pi_0(t)\theta_A^+\left(\frac{\pi_1(t)}{\pi_0(t) + \pi_1(t)}\right) - \pi_1(t)\left[\theta_A^-\left(\frac{\pi_0(t)}{\pi_0(t) + \pi_1(t)}\right) + \nu_H\hat a_1(v(t))\right]\\
&\;\;+ \pi_3(t)\nu_H\hat a_3(v(t)),\\\\
\dot{\pi_2}(t)=&\;\;\pi_3(t)\theta_B^-\left(\frac{\pi_2(t)}{\pi_2(t) + \pi_3(t)}\right) - \pi_2(t)\left[\theta_B^+\left(\frac{\pi_3(t)}{\pi_2(t) + \pi_3(t)}\right) + \nu_I\hat a_2(v(t))\right]\\
&\;\;+ \pi_0(t)\nu_I\hat a_0(v(t)),\\\\
\dot{\pi_3}(t)=&\;\;\pi_2(t)\theta_B^+\left(\frac{\pi_3(t)}{\pi_2(t) + \pi_3(t)}\right) - \pi_3(t)\left[\theta_B^-\left(\frac{\pi_2(t)}{\pi_2(t) + \pi_3(t)}\right) + \nu_H\hat a_3(v(t))\right]\\
&\;\;+ \pi_1(t)\nu_H\hat a_1(v(t)),\\
\end{align*}
where the optimal control is defined by:
\begin{align*}
\hat a_0(v):=&\;b\left(\frac{\nu_I(v_0 - v_2)}{\gamma_I}\right),\;\;\hat a_1(v):=b\left(\frac{\nu_H(v_1 - v_3)}{\gamma_H}\right),\\
\hat a_2(v):=&\;b\left(\frac{\nu_I(v_2 - v_0)}{\gamma_I}\right),\;\;\hat a_3(v):=b\left(\frac{\nu_H(v_3 - v_1)}{\gamma_H}\right),
\end{align*}
and the terminal conditions are $\pi(0) = \mathbf{p}^{\circ}$ and $v(T) = 0$.


\bibliographystyle{siam}

\begin{thebibliography}{10}

\bibitem{bensoussan2016lq}
{\sc A.~Bensoussan, K.~Sung, S.~C.~P. Yam, and S.-P. Yung}, {\em
  Linear-quadratic mean field games}, Journal of Optimization Theory and
  Applications, 169 (2016), pp.~496--529.

\bibitem{carmona2017book}
{\sc R.~Carmona and F.~Delarue}, {\em Probabilistic Theory of Mean Field Games
  with Applications I-II}, Springer, 2018.

\bibitem{carmona2015probabilistic}
{\sc R.~Carmona and D.~Lacker}, {\em A probabilistic weak formulation of mean
  field games and applications}, The Annals of Applied Probability, 25 (2015),
  pp.~1189--1231.

\bibitem{carmona2016discrete}
{\sc R.~Carmona and P.~Wang}, {\em Finite state mean field games with major and
  minor players}, arXiv preprint arXiv:1610.05408,  (2016).

\bibitem{carmona2018discrete}
{\sc R.~Carmona and P.~Wang}, {\em A probabilistic approach to extended finite
  state mean field games},  (2018).

\bibitem{cohen2008}
{\sc S.~N. Cohen and R.~J. Elliott}, {\em Solutions of backward stochastic
  differen- tial equations on {M}arkov chains}, Commun. Stoch. Anal.,  (2008),
  pp.~251--262.

\bibitem{cohen2010}
\leavevmode\vrule height 2pt depth -1.6pt width 23pt, {\em Comparisons for
  backward stochastic differential equations on {M}arkov chains and related
  no-arbitrage conditions}, Ann. Appl. Probab., 20 (2010), pp.~267--311.

\bibitem{cvitanic2015pa}
{\sc J.~Cvitani{\'c}, D.~Possama{\"\i}, and N.~Touzi}, {\em Dynamic programming
  approach to principal-agent problems}, arXiv preprint,  (2015).

\bibitem{cvitanic2016moral}
{\sc J.~Cvitani{\'c}, D.~Possama{\"\i}, and N.~Touzi}, {\em Moral hazard in
  dynamic risk management}, Management Science, 63 (2016), pp.~3328--3346.

\bibitem{elie2016pa}
{\sc R.~Elie, T.~Mastrolia, and D.~Possama{\"\i}}, {\em A tale of a principal
  and many many agents}, arXiv preprint arXiv:1608.05226,  (2016).

\bibitem{elliott1995}
{\sc R.~J. Elliott, L.~Aggoun, and J.~B. Moore}, {\em Hidden {M}arkov Models:
  Estimation and Control}, no.~29 in Applications of Mathematics, Springer, New
  York, 1995.

\bibitem{ethier2009markov}
{\sc S.~N. Ethier and T.~G. Kurtz}, {\em {M}arkov processes: characterization
  and convergence}, vol.~282, John Wiley \&amp; Sons, 2009.

\bibitem{fleming2006controlled}
{\sc W.~H. Fleming and H.~M. Soner}, {\em Controlled {M}arkov processes and
  viscosity solutions}, vol.~25, Springer Science \&amp; Business Media, 2006.

\bibitem{gomes2013discrete}
{\sc D.~A. Gomes, J.~Mohr, and R.~R. Souza}, {\em Continuous time finite state
  mean field games}, Applied Mathematics \&amp; Optimization, 68 (2013),
  pp.~99--143.

\bibitem{gueant2015discrete}
{\sc O.~Gu{\'e}ant}, {\em Existence and uniqueness result for mean field games
  with congestion effect on graphs}, Applied Mathematics and Optimization, 72
  (2015), pp.~291--303.

\bibitem{hellwig2002contract}
{\sc M.~F. Hellwig and K.~M. Schmidt}, {\em Discrete--time approximations of
  the holmstr{\"o}m--milgrom brownian--motion model of intertemporal incentive
  provision}, Econometrica, 70 (2002), pp.~2225--2264.

\bibitem{holmstrom1991multitask}
{\sc B.~Holmstrom and P.~Milgrom}, {\em Multitask principal-agent analyses:
  Incentive contracts, asset ownership, and job design}, Journal of Law,
  Economics, \&amp; Organization, 7 (1991), pp.~24--52.

\bibitem{kolokoltsov2016}
{\sc V.~Kolokoltsov and A.~Bensoussan}, {\em Mean-field-game model for botnet
  defense in cyber-security}, Applied Mathematics and Optimization, 74 (2016),
  pp.~669--692.

\bibitem{lasry2006jeux1}
{\sc J.-M. Lasry and P.-L. Lions}, {\em Jeux {\`a} champ moyen. i--le cas
  stationnaire}, Comptes Rendus Math{\'e}matique, 343 (2006), pp.~619--625.

\bibitem{lasry2006jeux2}
\leavevmode\vrule height 2pt depth -1.6pt width 23pt, {\em Jeux {\`a} champ
  moyen. ii--horizon fini et contr{\^o}le optimal}, Comptes Rendus
  Math{\'e}matique, 343 (2006), pp.~679--684.

\bibitem{muller1998contract}
{\sc H.~M. M{\"u}ller}, {\em The first-best sharing rule in the continuous-time
  principal-agent problem with exponential utility}, Journal of Economic
  Theory, 79 (1998), pp.~276--280.

\bibitem{pham2017dynamic}
{\sc H.~Pham and X.~Wei}, {\em Dynamic programming for optimal control of
  stochastic mckean--vlasov dynamics}, SIAM Journal on Control and
  Optimization, 55 (2017), pp.~1069--1101.

\bibitem{protter2005stochastic}
{\sc P.~E. Protter}, {\em Stochastic differential equations}, in Stochastic
  integration and differential equations, Springer, 2005, pp.~249--361.

\bibitem{sannikov2008continuous}
{\sc Y.~Sannikov}, {\em A continuous-time version of the principal-agent
  problem}, The Review of Economic Studies, 75 (2008), pp.~957--984.

\bibitem{sannikov2012contracts}
\leavevmode\vrule height 2pt depth -1.6pt width 23pt, {\em Contracts: The
  theory of dynamic principal-agent relationships and the continuous-time
  approach}, in 10th World Congress of the Econometric Society, 2012.

\bibitem{schattler1993contract}
{\sc H.~Sch{\"a}ttler and J.~Sung}, {\em The first-order approach to the
  continuous-time principal--agent problem with exponential utility}, Journal
  of Economic Theory, 61 (1993), pp.~331--371.

\bibitem{sokol2015exponential}
{\sc A.~Sokol and N.~R. Hansen}, {\em Exponential martingales and changes of
  measure for counting processes}, Stochastic analysis and applications, 33
  (2015), pp.~823--843.

\bibitem{sung1995contract}
{\sc J.~Sung}, {\em Linearity with project selection and controllable diffusion
  rate in continuous-time principal-agent problems}, The RAND Journal of
  Economics,  (1995), pp.~720--743.

\end{thebibliography}

\end{document}